\newcounter{count}
\DeclareMathOperator{\sign}{sgn}
\numberwithin{count}{section}
\newtheorem{Lemma}[count]{Lemma}
\newtheorem{Corollary}[count]{Corollary}
\newtheorem{Theorem}[count]{Theorem}
\begin{document}

\author[T.~H.~Nguyen]{Thu Hien Nguyen}
\address{Department of Mathematics \& Computer Sciences, 
V.~N.~Karazin Kharkiv National University,
4 Svobody Sq., Kharkiv, 61022, Ukraine}
\email{nguyen.hisha@karazin.ua}

\author[A.~Vishnyakova]{Anna Vishnyakova}
\address{Department of Mathematics \& Computer Sciences, V.~N.~Karazin Kharkiv National University,
4 Svobody Sq., Kharkiv, 61022, Ukraine}
\email{anna.vishnyakova@karazin.ua}

\title[Entire functions of the Laguerre--P\'olya I class]
{On the number of real zeros of real  entire functions  with a non-decreasing 
sequence of the second quotients of Taylor coefficients}

\begin{abstract}
For an  entire function $f(z) = \sum_{k=0}^\infty  a_k z^k,$
$a_k >0,$ we define the sequence  of the second quotients of Taylor coefficients  
$Q := \left( \frac{a_k^2}{a_{k-1}a_{k+1}}  \right)_{k=1}^\infty$. We find 
new necessary conditions for a function with a non-decreasing sequence 
$Q$ to belong to the Laguerre--P\'olya class of type I. We also estimate the
possible number of nonreal zeros for a function with a non-decreasing sequence 
$Q.$
\end{abstract}

\keywords {Laguerre--P\'olya class; Laguerre--P\'olya class of type I; entire functions of order zero; real-rooted 
polynomials; multiplier sequences; complex zero decreasing sequences}

\subjclass[2010]{30C15; 30D15; 30D35; 26C10}

\maketitle

\section{Introduction}
The topic of zero distribution of entire functions has been the subject of study and 
discussion of mathematicians for many years (see, for example, \cite{iv}). 
In the present paper, we consider a class of entire functions with positive Taylor 
coefficients and investigate the condition for them to belong to the Laguerre--P\'olya class of type I. 
We give the definitions of the Laguerre--P\'olya class and the Laguerre--P\'olya class of type I.

{\bf Definition 1}. {\it A real entire function $f$ is said to be in the {\it
Laguerre--P\'olya class}, written $f \in \mathcal{L-P}$, if it can
be expressed in the form
\begin{equation}
\label{lpc}
 f(z) = c z^n e^{-\alpha z^2+\beta z}\prod_{k=1}^\infty
\left(1-\frac {z}{x_k} \right)e^{zx_k^{-1}},
\end{equation}
where $c, \alpha, \beta, x_k \in  \mathbb{R}$, $x_k\ne 0$,  $\alpha \ge 0$,
$n$ is a nonnegative integer and $\sum_{k=1}^\infty x_k^{-2} < \infty$. }

{\bf Definition 2}. {\it  A real entire function $f$ is said to be in the {\it Laguerre--
P\'olya class of type I}, 
written $f \in \mathcal{L-P} I$, if it can
be expressed in the following form  
\begin{equation}  \label{lpc1}
 f(z) = c z^n e^{\beta z}\prod_{k=1}^\infty
\left(1+\frac {z}{x_k} \right),
\end{equation}
where $c \in  \mathbb{R},  \beta \geq 0, x_k >0 $, 
$n$ is a nonnegative integer,  and $\sum_{k=1}^\infty x_k^{-1} <
\infty$.} As usual, the product on the right-hand sides in both definitions can be
finite or empty (in the latter case the product equals 1).

These classes are important for the theory of entire functions since 
the  hyperbolic polynomials (i.e. real polynomials with only real zeros), 
or hyperbolic polynomials  with nonnegative coefficients
converge locally  uniformly to these and only these functions. 
The following  prominent theorem 
states even a stronger fact. 

{\bf Theorem A} (E.~Laguerre and G.~P\'{o}lya, see, for example,
\cite[p. ~42--46]{HW}) and \cite[chapter VIII, \S 3]{lev}). {\it   

(i) Let $(P_n)_{n=1}^{\infty},\  P_n(0)=1, $ be a sequence
of real polynomials having only real zeros which  converges uniformly on the disc 
$|z|\leq A, A > 0.$ Then this sequence converges locally uniformly in $\mathbb{C}$ 
to an entire function 
from the $\mathcal{L-P}$ class.

(ii) For any $f \in \mathcal{L-P}$ there exists a sequence of real polynomials 
with only real zeros which converges locally uniformly to $f$.

(iii) Let $(P_n)_{n=1}^{\infty},\  P_n(0)=1, $ be a sequence
of real polynomials having only real negative zeros which  
converges uniformly on the disc $|z| \leq A, A > 0.$ Then this 
sequence converges locally uniformly in $\mathbb{C}$ to an entire function
 from the class $\mathcal{L-P}I.$
 
(iv) For any $f \in \mathcal{L-P}I$ there is a
sequence of real polynomials with only real nonpositive 
zeros which  converges locally uniformly to $f$.}

Numerous properties and features of the
Laguerre--P\'olya  class and the Laguerre--P\'olya class of type I can be found in the works 
\cite[p. 100]{pol}, \cite{polsch}  and  \cite[Kapitel II]{O} (also see the
survey \cite{iv} on the zero distribution of  entire functions, its sections and tails).
Note that for a real entire function (not identically zero) of the order less than $2$ the property of having 
only real zeros is  equivalent to belonging to the Laguerre--P\'olya class. Also, 
for a real entire function with positive coefficients of the order 
less than $1$ having only real negative zeros is  equivalent to belonging to the Laguerre--P\'olya 
class of type I. In particular, the same property is valid for polynomials.

Let  $f(z) = \sum_{k=0}^\infty a_k z^k$  be an entire function with 
real nonzero coefficients. We define the quotients $p_n$ and $q_n$:
\begin{align*}
p_n=p_n(f)&:=\frac{a_{n-1}}{a_n}, \quad n\geq 1, \\
q_n=q_n(f)&:= \frac{p_n}{p_{n-1}} = \frac{a_{n-1}^2}{a_{n-2}a_n}, \quad n\geq 2.
\end{align*}
From these definitions it follows straightforwardly that
\begin{align*}
& a_n=\frac{a_0}{p_1p_2\cdots p_n}, \quad n\geq 1, \\
& a_n = a_1\Big(\frac{a_1}{a_0} \Big)^{n-1} \frac{1}{q_2^{n-1}q_3^{n-2}
\cdots q_{n-1}^2 q_n}, \quad n\geq 2.
\end{align*}

It is rather a complicated problem to understand whether a given entire function has only real zeros. 
However, in 1926, J. I. Hutchinson found quite a simple sufficient condition for an entire function 
with positive coefficients to have only real zeros.

{\bf Theorem B} (J. ~I. ~Hutchinson, \cite{hut}). { \it Let $f(z)=
\sum_{k=0}^\infty a_k z^k$, $a_k > 0$ for all $k$. 
Then $q_n(f)\geq 4$, for all $n\geq 2,$  
if and only if the following two conditions are fulfilled:\\
(i) The zeros of $f$ are all real, simple and negative, and \\
(ii) The zeros of any polynomial $\sum_{k=m}^n a_kz^k$, $m < n,$  formed 
by taking any number 
of consecutive terms of $f $, are all real and non-positive.}
For some extensions of Hutchinson's results see,
for example, \cite[\S4]{cc1}.

A special entire function $g_a(z) =\sum _{k=0}^{\infty}a^{-k^2} z^k $, $a>1,$
known as a \textit{partial theta function} (the classical Jacobi theta function is defined 
by the series $\theta(z) := \sum_{k = - \infty}^{\infty} a^{-k^2} z^k $),  was investigated 
by many mathematicians and has an important role. Note that  $q_n(g_a)=a^2$ for all $n.$ 
The  survey \cite{warnaar} by S.~O.~Warnaar contains the history of
investigation of the partial theta function and some of its main properties. 

In particular, in the paper \cite{klv} it was explained that  for every $n\geq 2,$ there exists a constant $c_n >1$ 
such that  for each $n \in \mathbb{N},$ $S_{n}(z,g_a):=\sum _{j=0}^{n} a^{-j^2} z^j  \in \mathcal{L-P}$ if and only if
$ a^2 \geq c_n.$ The notation of the constants $c_n$ having this property will be further used.

{\bf Theorem C} (O. ~Katkova, T. ~Lobova, A. ~Vishnyakova, \cite{klv}).  {\it There exists a constant 
$q_\infty $ $(q_\infty\approx 3{.}23363666) $ such that:
\begin{enumerate}
\item
$g_a(z) \in \mathcal{L-P} \Leftrightarrow \ a^2\geq q_\infty ;$
\item
$g_a(z) \in \mathcal{L-P} \Leftrightarrow \ $  there exists $z_0 \in (- a^3, -a)$ 
such that $ \  g_a(z_0) \leq 0$
\item  
if there exists $z_0 \in (- a^3, -a)$ 
such that $ \  g_a(z_0)< 0,$ then $ a^2 > q_\infty;$
\item
for a given $n\geq 2$ we have $S_{n}(z,g_a) \in \mathcal{L-P}$ $ \  \Leftrightarrow \ $
there exists $z_n \in (- a^3, -a)$ such that $ \ S_{n}(z_n,g_a) \leq 0;$
\item
if there exists $z_n \in (- a^3, -a)$ 
such that $ \  S_{n}(z_n, g_a) < 0,$ then $ a^2 > c_n;$
\item
$ 4 = c_2 > c_4 > c_6 > \cdots $  and    $\lim_{n\to\infty} c_{2n} = q_\infty ;$
\item
$ 3= c_3 < c_5 < c_7 < \cdots $  and    $\lim_{n\to\infty} c_{2n+1} = q_\infty .$
\end{enumerate}}
Calculations show that
$c_4 = 1+ \sqrt{5} \approx 3{.}23607,$ $c_6 \approx 3{.} 23364$ and
 $c_5 \approx  3{.} 23362,$ $c_7 \approx 3{.}23364.$

The partial theta function is of interest to many areas such as statistical physics and 
combinatorics \cite{sokal}, Ramanujan type $q$-series \cite{warnaar1}, asymptotic 
analysis and the theory of (mock) modular forms, etc. There is a series of works by 
V.P. ~Kostov dedicated to various properties of zeros of 
the partial theta function and its derivative (see \cite{ kos5, kos5.0} and the 
references therein). The  paper \cite{kosshap} among the other results explains the role 
of the constant $q_\infty $  in the study of the set of entire functions with positive 
coefficients having all Taylor truncations with only real zeros. In \cite{klv1}, the following questions are investigated: whether the Taylor sections 
of the function $\prod \limits_{k=1}^\infty \left(1 + \frac{z}{a^k} \right)$, 
$a > 1,$ and $\sum_{k=0}^{\infty}\frac{z^k}{k!a^{k^2}},$ $a\geq 1,$ belong to the 
Laguerre--P\'olya class of type I. In \cite{BohVish} and 
\cite{Boh}, some important special functions with non-decreasing sequence 
of the second quotients 
of Taylor coefficients  are studied. 

The first author studied a special function related to the partial theta function and 
the Euler function 
$$f_a(z) = \sum_{k=0}^\infty \frac{z^k}{(a^k+1)(a^{k-1}+1)
\cdots (a+1)}, \quad a>1,$$  which is also known 
as the $q$-Kummer function $\prescript{}{1}{\mathbf{\phi}}_1(q;-q; q,-z)$, 
where $q=1/a$ (see \cite{GR}, formula (1.2.22)). Note that its second 
quotients of Taylor coefficients are $$q_n(f_a) = \frac{a^n + 1}{a^{n - 1} + 1}, $$  which is an increasing sequence in $n$ for $a > 1$, with the limit value given by $a$. In \cite{ngth1}, the conditions  were found for this function to 
belong to the Laguerre--P\'olya class.

It turns out that for many important entire functions with positive
coefficients $f(z)=\sum_{k=0}^\infty a_k z^k $ (for example, the partial theta function
from \cite{klv}, functions from \cite{BohVish} and \cite{Boh}, 
the $q$-Kummer function $\prescript{}{1}{\mathbf{\phi}}_1(q;-q; q,-z)$ 
and others) the following two conditions
are equivalent: 

(i) $f$ belongs to the Laguerre--P\'olya class of type I,  

and 

(ii) there exists $x_0 \in [-\frac{a_1}{a_2},0]$ such that $f(x_0) \leq 0.$

In our previous work we proved the following necessary condition for a function to belong
to the Laguerre--P\'olya class. 

{\bf Theorem D}  (T. ~H. ~Nguyen, A. ~Vishnyakova, \cite{ngthv3}).
{\it Let $f(z)=\sum_{k=0}^\infty a_k z^k$, $a_k > 0$ for all $k,$  be an 
entire function. Suppose that  
$ q_2(f) \leq q_3(f) .$  If the function $f$ belongs to the Laguerre-P\'olya class, 
then there exists $x_0 \in [- \frac{a_1}{a_2}, 0] $ such that $f(x_0) \leq 0.$}

In \cite{ngthv4} we have obtained a criterion for belonging to the 
Laguerre--P\'olya class of type I for real entire
functions with the regularly non-decreasing sequence of second quotients 
of Taylor coefficients in terms of the existence of a point $x_0$ as in Theorem D. It was previously shown in \cite{ngthv2} that if $f(z)=\sum_{k=0}^\infty a_k z^k $, $a_k > 0$ 
for all $k,$ is an entire function such that $q_2 \leq q_3 \leq 
q_4 \leq \cdots, $   and  $\lim\limits_{n\to \infty} q_n(f) = c \geq q_\infty,$
then the function $f$ belongs to the  Laguerre--P\'olya class, where $q_\infty$ is a constant from Theorem C.

In the present paper we prove that the following  conditions on the second quotients $q_k$ are necessary for 
the function to belong to the Laguerre--P\'olya I class:
\begin{Theorem}
\label{th:mthm1}
Let $f(z) = \sum_{k=0}^\infty  a_k z^k,$ $a_k >0,  k=0, 1, 2, \ldots ,$ be an entire function 
such that $q_2(f) \leq q_3(f) \leq q_4(f) \leq \cdots.$
If $f \in \mathcal{L-P} I$, then for any $k = 1, 2, 3, \ldots,$ the following inequality 
holds: $q_{2n+1} > c_{2k+1} $ ($c_{2k+1}$ defined as in Theorem C).
\end{Theorem}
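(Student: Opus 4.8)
The plan is to argue by contradiction and to reduce the assertion to a comparison with the extremal partial theta section at its critical parameter, for which Theorem C supplies a sharp sign criterion. Fix $k \ge 1$ and suppose, contrary to the claim, that $q_{2k+1}(f) \le c_{2k+1}$. Put $\gamma := c_{2k+1}$ and $a := \sqrt{\gamma}$, so that $a^2$ is exactly the critical value at which $S_{2k+1}(\cdot, g_a)$ lies on the boundary of $\mathcal{L-P}$ (Theorem C, (4)--(5)). Since $q_2(f) \le q_3(f) \le \cdots$, the assumption forces $q_j(f) \le \gamma$ for every $2 \le j \le 2k+1$. After the harmless normalization $a_0 = a_1 = 1$ (replacing $f(z)$ by $f(cz)/f(0)$ for a suitable $c > 0$, which alters neither membership in $\mathcal{L-P}I$ nor the quotients $q_j$), the identity $a_j = (q_2^{\,j-1} q_3^{\,j-2}\cdots q_j)^{-1}$ yields the termwise domination $a_j \ge \gamma^{-j(j-1)/2}$ for $2 \le j \le 2k+1$, where the numbers $\gamma^{-j(j-1)/2}$ are precisely the identically normalized coefficients of the partial theta function with $a^2 = \gamma$.

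I would then set up the comparison on the interval on which Theorem C operates. By the definition of $c_{2k+1}$ and Theorem C (4), the critical (normalized) theta section $T(z) := \sum_{j=0}^{2k+1} \gamma^{-j(j-1)/2} z^j$ vanishes to order two at some point $z^* \in (-\gamma, -1)$, this interval being the image of $(-a^3, -a)$ under the normalizing rescaling $z \mapsto z/a$. The goal is to show that the corresponding section $P(z) := \sum_{j=0}^{2k+1} a_j z^j$ of $f$, together with the tail $f(z) - P(z)$, cannot be reconciled with the sign behaviour that membership in $\mathcal{L-P}I$ forces on the critical interval through Theorem D and Theorem C (2).

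The delicate point, and the step I expect to be the main obstacle, is the sign analysis at the critical point. Termwise domination $a_j \ge \gamma^{-j(j-1)/2}$ does not by itself control the sign of $P(z^*)$, since at $z^* < 0$ the even- and odd-indexed terms enter with opposite signs, so enlarging the coefficients does not move $P(z^*)$ monotonically. The correct input is the full ordered structure $q_2 \le q_3 \le \cdots \le q_{2k+1}$, not merely the bound by $\gamma$; I would exploit this by a summation by parts that groups consecutive terms according to the nonnegative increments $q_{j+1} - q_j$, rewriting $P(z^*) - T(z^*) = P(z^*)$ as a combination of nonnegative contributions paired against the sign pattern fixed by the double zero of $T$. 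This should show that lowering the quotients strictly below $\gamma$ at any stage drives the critical value strictly negative and forces a conjugate pair of zeros off the real axis, exactly mirroring the transition from $a^2 = c_{2k+1}$ to $a^2 < c_{2k+1}$ in Theorem C (5); the reason the odd indices $2k+1$ are singled out is that this sign obstruction is carried by the leading, odd-degree term of the section.

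Finally I would transfer the conclusion from $P$ to $f$ itself. Here I would use Theorem A (iii)--(iv) to approximate $f \in \mathcal{L-P}I$ by polynomials with only real nonpositive zeros and to obtain uniform control of the tail $\sum_{j > 2k+1} a_j z^j$ on the compact interval $[-\gamma, -1]$, verifying that it cannot restore the positivity destroyed on the section. Combining this tail estimate with the sign criterion of Theorem C (items (2) and (4)) and with Theorem D then contradicts $f \in \mathcal{L-P}I$, establishing $q_{2k+1}(f) > c_{2k+1}$ for every $k$. The two places demanding genuine care are the summation-by-parts sign bookkeeping at $z^*$ and the uniform control of the tail on the critical interval; the remaining steps are normalization and direct appeals to the quoted theorems.
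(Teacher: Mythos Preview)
Your proposal has the overall logical flow inverted, and the inversion creates a genuine gap at the ``transfer from section to $f$'' step. In the paper the argument runs \emph{forward} from the hypothesis $f\in\mathcal{L-P}I$: Theorem~D supplies a point $x_0\in(1,q_2]$ with $\varphi(x_0)\le 0$; since the alternating tail $R_{2n+2}(x,\varphi)$ is strictly positive on $(1,q_2]$ by Leibniz, one gets $S_{2n+1}(x_0,\varphi)<0$; a monotonicity comparison (the heart of the proof) then yields $S_{2n+1}(-\sqrt{q_{2n+1}}\,x_0,\,g_{\sqrt{q_{2n+1}}})<0$ with $-\sqrt{q_{2n+1}}\,x_0\in(-(\sqrt{q_{2n+1}})^3,-\sqrt{q_{2n+1}})$; and Theorem~C(5) gives $q_{2n+1}>c_{2n+1}$. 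Note that the comparison is made with the theta section at parameter $\sqrt{q_{2n+1}}$, \emph{not} at the critical parameter $\sqrt{c_{2k+1}}$, and the direction of the inequality needed is $S_{2n+1}(x,\varphi)\ge S_{2n+1}(-\sqrt{q_{2n+1}}x,g_{\sqrt{q_{2n+1}}})$.

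Your scheme tries to go the other way: assume $q_{2k+1}\le c_{2k+1}$, produce a sign obstruction on the section $P$, and then contradict $f\in\mathcal{L-P}I$. But a section having nonreal zeros, or being negative somewhere, does \emph{not} contradict membership of $f$ in $\mathcal{L-P}I$ (think of $e^z$, whose truncations have nonreal zeros), and Theorem~D is only a necessary condition, so it cannot be invoked to derive $f\notin\mathcal{L-P}I$ from a negative value. Your tail step is also oriented the wrong way: on $(1,q_2]$ the even-index tail is \emph{positive}, so it can only make $\varphi$ \emph{larger} than the odd section, i.e.\ the implication goes from $\varphi(x_0)\le 0$ to $S_{2n+1}(x_0,\varphi)<0$, not conversely. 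Finally, the paper's comparison is not a summation by parts in the increments $q_{j+1}-q_j$; it pairs the terms $(2k,2k+1)$ and shows by a direct partial-derivative computation that each such pair, for $x\in(1,q_2]$, is monotone decreasing in every $q_j$ with $j\le 2k+1$, allowing one to replace all the $q_j$ by $q_{2n+1}$ and land on the theta section. That monotonicity argument (and the choice of parameter $q_{2n+1}$ rather than $c_{2k+1}$) is the missing key idea in your outline.
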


\begin{Corollary}
\label{Cor1}
Let $f(x) = \sum_{k=0}^\infty  a_k x^k,$ $a_k >0,  k=0, 1, 2, \ldots ,$ be an entire function 
such that $q_2(f) \leq q_3(f) \leq q_4(f) \leq \cdots.$
If $f \in \mathcal{L-P}$, then $q_2(f) > 3.$
\end{Corollary}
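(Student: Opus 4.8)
The plan is to derive the Corollary from Theorem~\ref{th:mthm1} and Theorem~D, after first upgrading the hypothesis $f\in\mathcal{L-P}$ to $f\in\mathcal{L-P}I$. To get into $\mathcal{L-P}I$ I would argue as follows: since all $a_k>0$, $f$ is transcendental with no nonnegative real zero, and $f\in\mathcal{L-P}$ forces every zero to be real, hence negative. Writing $\log p_k=\log p_1+\sum_{j=2}^k\log q_j$, the monotonicity $q_2\le q_3\le\cdots$ yields a limit $L=\lim q_n$; if $L\le 1$ then all $q_j\le 1$, so $\log p_k\le\log p_1$ stays bounded and $\tfrac1n\log a_n\not\to-\infty$, contradicting entireness, whence $L>1$. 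Then $\log p_k\sim k\log L$ gives $-\log a_n\sim\tfrac12 n^2\log L$, so $f$ has order $0$; an order-zero function with only negative zeros and positive coefficients is exactly $a_0\prod_k(1+z/t_k)$ with $t_k>0$, $\sum t_k^{-1}<\infty$, i.e. $f\in\mathcal{L-P}I$. Now the case $k=1$ of Theorem~\ref{th:mthm1}, together with $c_3=3$ from Theorem~C, gives $q_3(f)>3$.

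Next I would argue by contradiction, assuming $q_2\le 3$. By Theorem~D there is $x_0\in[-a_1/a_2,0]$ with $f(x_0)\le 0$; since $f(0)=a_0>0$ we write $x_0=-s$ with $0<s\le a_1/a_2=p_2$. Because $q_j\ge q_3>1$ for $j\ge 3$, the quantities $p_k=p_1q_2\cdots q_k$ are increasing for $k\ge 2$, so $s\le p_2\le p_k$ and the terms $a_ks^k$ are non-increasing for $k\ge 3$. The alternating tail $\sum_{k\ge 3}a_k(-s)^k$ is then bounded below by its first term $-a_3s^3$, which yields the key lower bound $f(-s)\ge S_3(-s)$, where $S_3(x)=a_0+a_1x+a_2x^2+a_3x^3$ is the cubic truncation of $f$.

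It then suffices to show $S_3(-s)>0$, contradicting $f(-s)\le 0$. Normalizing $a_0=a_1=1$ (so $a_2=1/q_2$, $a_3=1/(q_2^2q_3)$, $p_2=q_2$) and substituting $x=-q_2u$ with $u\in[0,1]$, I would compute
\[
S_3(-q_2 u)=1-q_2\Bigl(u-u^2+\tfrac{u^3}{q_3}\Bigr),\qquad u\in[0,1].
\]
Using $q_3>3$ gives $u-u^2+u^3/q_3<u-u^2+u^3/3$ for $u\in(0,1]$, and then $q_2\le 3$ gives $S_3(-q_2u)>1-3\bigl(u-u^2+u^3/3\bigr)=(1-u)^3\ge 0$. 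Hence $S_3>0$ on all of $[-a_1/a_2,0]$, so $f>0$ there, contradicting Theorem~D; therefore $q_2>3$.

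The main obstacle is exactly the gap between $q_3>3$, which Theorem~\ref{th:mthm1} delivers at once, and the desired $q_2>3$: since $(q_n)$ is only non-decreasing, $q_3>3$ carries no a priori information about the smaller quantity $q_2$. Bridging this gap is the crux, and it is resolved by the two-sided interplay above — Theorem~D locates a point where $f$ is non-positive, while the monotone tail estimate reduces the obstruction to the explicit cubic $S_3$, whose extremal configuration is the perfect cube $(1-u)^3$ sitting precisely at the threshold $c_3=3$. I expect the delicate points to be the clean lower bound $f(-s)\ge S_3(-s)$ (which needs the decay of the tail, hence $q_3>1$) and the strictness $q_3>3$ (supplied by Theorem~\ref{th:mthm1}) that makes the cubic inequality strict and rules out the degenerate case $q_2=q_3=3$.
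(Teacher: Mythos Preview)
Your argument is correct, and it takes a genuinely different route from the paper's. The paper proceeds by quoting two external results: first the bound $q_2\ge 3$ from \cite[Lemma~2.1]{ngthv2}, and then the explicit inequality from \cite[Theorem~1.4]{ngthv3}, which for $q_2<4$ gives
\[
q_3 \le \frac{-q_2(2q_2-9)+2(q_2-3)\sqrt{q_2(q_2-3)}}{q_2(4-q_2)}.
\]
At $q_2=3$ this collapses to $q_3\le 3$, contradicting the conclusion $q_3>3$ of Theorem~\ref{th:mthm1}. Your proof is more self-contained: you bypass both external citations by combining Theorem~D with the Leibniz tail estimate $f(-s)\ge S_3(-s)$ and the direct cubic computation $S_3(-q_2u)>1-3u+3u^2-u^3=(1-u)^3\ge 0$ for $q_2\le 3$, $q_3>3$. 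The threshold $c_3=3$ emerges transparently as the coefficient making the cubic a perfect cube, which is arguably more illuminating than the algebraic formula the paper invokes. The preliminary upgrade $\mathcal{L\text{-}P}\Rightarrow\mathcal{L\text{-}P}I$ is correct but not strictly needed here: the proof of Theorem~\ref{th:mthm1} in the paper only uses Theorem~D, whose hypothesis is $f\in\mathcal{L\text{-}P}$, so $q_3>3$ already follows from the weaker assumption of the Corollary. The only cost of your approach is that it does not obviously extend to higher $c_{2k+1}$, whereas the paper's reliance on \cite{ngthv3} sits inside a more general framework relating $q_2$ and $q_3$.
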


In \cite{ngthv4} we obtained the following  result. 

{\bf Theorem E} (T. ~H. ~Nguyen, A. ~Vishnyakova, \cite{ngthv4}).
{\it Let $f(z) = \sum_{k=0}^\infty  a_k z^k,$
$a_k >0,  k=0, 1, 2, \ldots ,$ be an entire function 
such that $2\sqrt[3]{2} \approx 2{.}51984 \leq q_2(f) \leq q_3(f) \leq q_4(f) \leq \cdots.$ Then all but a finite number of zeros of $f$ are real and
simple.}

Our next theorem estimates the possible number of nonreal zeros
for such functions.
\begin{Theorem}
\label{th:mthm2}
Let $f(z) = \sum_{k=0}^\infty  a_k z^k,$ $a_k >0,  k=0, 1, 2, \ldots ,$ be an entire function 
such that $2\sqrt[3]{2} \approx 2{.}51984 \leq  q_2(f) \leq q_3(f) \leq q_4(f) \leq \cdots.$
If there exist $j_0 = 2, 3, 4, \ldots$ and $m_0 \in \mathbb{N},$   
such that  $q_{j_0} \geq c_{2m_0},$ 
then the number of nonreal  zeros of  $f$ does not exceed $j_0 +2m_0 - 2$ 
($c_{2k}$ defined as in Theorem C).
\end{Theorem}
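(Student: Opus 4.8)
The plan is to reduce the statement to a counting problem for the Taylor sections $S_n(z,f)=\sum_{k=0}^{n}a_kz^k$ and then to compare these sections with sections of a partial theta function. Throughout, write $N_{\mathbb C}(g)$ for the number of nonreal zeros, with multiplicity, of a polynomial or entire function $g$. By Theorem E the standing hypothesis $2\sqrt[3]{2}\le q_2(f)\le q_3(f)\le\cdots$ already forces $N_{\mathbb C}(f)<\infty$, so the quantity to be estimated is finite and the problem is genuinely combinatorial. First I would fix pairwise disjoint closed discs around the finitely many nonreal zeros of $f$, none of them meeting the real axis. Since $S_n\to f$ locally uniformly, Hurwitz's theorem shows that for all large $n$ each such disc contains exactly as many zeros of $S_n$ as of $f$, and these are nonreal; hence $N_{\mathbb C}(S_n)\ge N_{\mathbb C}(f)$ for all large $n$, so that $N_{\mathbb C}(f)\le\liminf_{n\to\infty}N_{\mathbb C}(S_n)$. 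It therefore suffices to bound $N_{\mathbb C}(S_n)$ by $j_0+2m_0-2$ for all sufficiently large $n$.

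For the comparison I would set $a:=\sqrt{c_{2m_0}}$, so that the partial theta function $g_a$ has $q_k(g_a)=c_{2m_0}$ for every $k$. By the definition of the constants $c_n$ recalled before Theorem C, $S_{2m_0}(z,g_a)\in\mathcal{L-P}$, and since $c_{2m}\le c_{2m_0}$ for $m\ge m_0$ while $c_{2m+1}<q_\infty<c_{2m_0}$ for all $m$ (Theorem C, items (6)--(7)), every section of $g_a$ of length at least $2m_0$ lies in $\mathcal{L-P}$. Taking $j_0$ minimal with $q_{j_0}(f)\ge c_{2m_0}$, the monotonicity of the $q_k(f)$ gives $q_k(f)<c_{2m_0}$ for $2\le k<j_0$ and $q_k(f)\ge c_{2m_0}=q_k(g_a)$ for all $k\ge j_0$. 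The engine of the argument is a complex-zero-decreasing comparison: replacing the tail quotients $q_k(f)$, $k\ge j_0$, by the smaller common value $c_{2m_0}$, while keeping the head $a_0,\dots,a_{j_0-1}$ fixed, should not decrease the number of nonreal zeros. Concretely, I would produce from $S_n(z,f)$ a polynomial $P_n$ whose quotient sequence is $q_2(f),\dots,q_{j_0-1}(f),c_{2m_0},c_{2m_0},\dots$ (still non-decreasing, and genuinely of partial-theta type beyond index $j_0$), for which $N_{\mathbb C}(S_n(z,f))\le N_{\mathbb C}(P_n)$; the relevant complex-zero-decreasing operators here are the Gaussian multipliers $a_k\mapsto s^{\binom{k}{2}}a_k$ with $0<s\le1$, for which $s^{\binom{k}{2}}=\phi(k)$ with $\phi(x)=e^{-\alpha x^2+\beta x}\in\mathcal{L-P}$, $\alpha=-\tfrac12\ln s\ge0$, together with Laguerre linear factors $a_k\mapsto(\alpha k+\beta)a_k$.

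It then remains to prove $N_{\mathbb C}(P_n)\le j_0+2m_0-2$ uniformly in large $n$. The point is that the tail of $P_n$ is a true section of $g_a$, which becomes hyperbolic once its length reaches $2m_0$; the nonreal zeros can only come from the fixed head of length $j_0$ and from the fact that a section of $g_a$ shorter than $2m_0$ need not lie in $\mathcal{L-P}$, and a careful count should make these contributions \emph{add} to the stated $j_0+2m_0-2$ rather than compound. The step I expect to be the main obstacle is precisely this localized complex-zero-decreasing comparison: a uniform Gaussian multiplier rescales \emph{all} quotients, whereas here only the tail quotients must be lowered to $c_{2m_0}$ with the head left untouched, so the off-the-shelf CZDS theorem does not apply verbatim and one must build the comparison respecting the split at index $j_0$. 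Closing that gap, and pinning down the exact additive constant by using the sharp count of nonreal zeros of the critical section $S_{2m_0}(z,g_a)$ and the fine ordering of the constants $c_{2m}$ from Theorem C, is the delicate heart of the proof; the reduction to sections and the Hurwitz passage, by contrast, are routine.
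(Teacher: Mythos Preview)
Your plan departs substantially from the paper's argument, and the place where it departs is exactly the place where it breaks down. The paper never passes to sections and never invokes multiplier/CZDS machinery. Instead it works directly with $\varphi(x)=f(-x)$ and proves two things: (i) for every $j\ge j_0+2m_0-3$ there is a point $x_j\in(p_1\cdots p_j,\,p_1\cdots p_{j+1})$ with $(-1)^j\varphi(x_j)\ge0$; (ii) for all large $k$ the disk $|z|<\rho_k=q_2\cdots q_k\sqrt{q_{k+1}}$ contains exactly $k$ zeros of $f$. Step (i) is obtained by isolating the $2m_0+1$ consecutive terms of the alternating series centered at the $j$-th (largest) term, discarding the two outer tails by Leibniz-type sign reasoning, and then comparing that finite central block with the section $S_{2m_0}(\cdot,g_{\sqrt{q_{j-2m_0+3}}})$ of the partial theta function; here Theorem~C(4) supplies a point at which this section is $\le0$. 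Combining (i) and (ii), $\varphi$ has at least $k-j_0-2m_0+2$ real zeros in the disk of radius $\rho_k$, hence at most $j_0+2m_0-2$ nonreal zeros there, for every large $k$.

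The gap in your outline is the ``localized'' CZDS step, and you flag it yourself. You need an operator that raises $q_k$ for $k\ge j_0$ while leaving $q_2,\ldots,q_{j_0-1}$ fixed \emph{and} does not increase the nonreal-zero count. Gaussian multipliers $a_k\mapsto s^{\binom{k}{2}}a_k$ multiply every $q_k$ by the same factor $1/s$, so they cannot effect this split; piecing together Laguerre factors $(\alpha k+\beta)$ does not help either, since those alter $q_k$ in a rigid, globally determined way. There is no off-the-shelf CZDS result that lets you modify only a tail of the quotient sequence, and I do not see a way to manufacture one here without essentially reproving a sign-change lemma of the paper's type. Your subsequent step---bounding $N_{\mathbb C}(P_n)$ by $j_0+2m_0-2$ from the observation that the tail is a hyperbolic theta section---is also only heuristic: a polynomial whose ``tail'' is hyperbolic can still have many nonreal zeros, and no mechanism in your sketch localizes the nonreal zeros to the head. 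The Hurwitz reduction to sections is correct but, as the paper shows, unnecessary; the real work is producing enough sign changes of $\varphi$ on the positive axis, and that is done by a local alternating-series estimate, not by a global multiplier comparison.
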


\section{Proof of Theorem \ref{th:mthm1} and Corollary \ref{Cor1}}

Without loss of generality, we can assume that $a_0=a_1=1,$ since we can 
consider a function $g(x) =a_0^{-1} f (a_0 a_1^{-1}x) $  instead of 
$f(x),$ due to the fact that such rescaling of $f$ preserves its property of 
having real zeros as well as the second quotients:  $q_n(g) =q_n(f)$ 
for all $n \in \mathbb{N}.$ During the proof instead of 
$p_n(f)$ and $q_n(f)$ we use notation $p_n$ and $q_n.$ 
It is more convenient to consider a function  $$\varphi(x) = f(-x) = 1 - x + \sum_{k=2}^\infty  \frac{ (-1)^k x^k}
{q_2^{k-1} q_3^{k-2} \cdots q_{k-1}^2 q_k}$$  instead of $f.$

Theorem D states that if $\varphi$ belongs to the Laguerre--P\'olya  class then 
there exists a point $x_0 \in [0,\frac{a_1}{a_2}] = [0, q_2] $ such that $\varphi(x_0) \leq 0.$
Let us introduce some more notation. For an entire function $\varphi$, by $S_n(x, \varphi)$ 
and $R_{n}(x, \varphi)$ we denote the $n$th partial sum and the $n$th remainder of the series,
i.e. 
$$S_n(x, \varphi) = \sum_{k=0}^n \frac{(-1)^k x^k}{q_2^{k-1} q_3^{k-2} \cdots q_{k-1}^2 q_k},$$
and 
$$R_{n}(x, \varphi) = \sum_{k=n}^\infty \frac{(-1)^k  x^k}{q_2^{k-1} q_3^{k-2} \cdots q_{k-1}^2 q_k}.$$

First, we need the following Lemma.

\begin{Lemma}
\label{th:lm1}
Let $\varphi(x) =  1 - x + \sum_{k=2}^\infty  \frac{ (-1)^k x^k}
{q_2^{k-1} q_3^{k-2} \cdots q_{k-1}^2 q_k} $  be an entire function.
Suppose that  $q_k$ are non-decreasing in $k:$    $1 < q_2 \leq q_3 \leq q_4 \leq \cdots.$
 If there exists $x_0 \in [0, q_2]$ such that  $\varphi(x_0) \leq 0,$ then $x_0 \in (1, q_2].$
\end{Lemma}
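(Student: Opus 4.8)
The claim is equivalent to showing that $\varphi(x) > 0$ for every $x \in [0,1]$: once this is established, any point $x_0 \in [0,q_2]$ with $\varphi(x_0) \le 0$ must satisfy $x_0 > 1$, hence $x_0 \in (1,q_2]$. So the plan is to prove positivity of $\varphi$ on the unit interval. Write $\varphi(x) = \sum_{k=0}^\infty (-1)^k a_k x^k$, where $a_0 = a_1 = 1$ and $a_k = (q_2^{k-1} q_3^{k-2} \cdots q_{k-1}^2 q_k)^{-1}$ for $k \ge 2$. The first thing I would record is the monotonicity of the coefficient sequence: since $a_{k-1}/a_k = p_k = q_2 q_3 \cdots q_k$ and every $q_j > 1$, we have $p_k > 1$ for all $k \ge 2$, so $a_1 > a_2 > a_3 > \cdots$ while $a_0 = a_1$. (Only $q_j > 1$ is needed here; the full monotonicity of $(q_k)$ is not required for this lemma.)

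The main step is to group the alternating series into consecutive pairs. Because $\varphi$ is entire, the series converges absolutely on all of $\mathbb{C}$, so I may freely group terms, obtaining
$$\varphi(x) = (1 - x) + \sum_{j=1}^\infty x^{2j}\bigl(a_{2j} - a_{2j+1} x\bigr).$$
For $x \in [0,1]$ the factor $x^{2j} \ge 0$, and $a_{2j} - a_{2j+1} x \ge a_{2j} - a_{2j+1} > 0$ because $a_{2j} > a_{2j+1}$ and $x \le 1$; hence every summand in the series is nonnegative, while the leading term $1 - x$ is nonnegative on $[0,1]$. This already yields $\varphi(x) \ge 0$ there.

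It remains to upgrade nonnegativity to strict positivity, which is the only delicate point. For $x \in [0,1)$ the term $1 - x$ is strictly positive and the remaining terms are nonnegative, so $\varphi(x) > 0$; and at $x = 1$ the term $1 - x$ vanishes, but the $j = 1$ summand equals $a_2 - a_3 > 0$, so $\varphi(1) > 0$ as well. Thus $\varphi > 0$ on all of $[0,1]$, and the conclusion $x_0 \in (1,q_2]$ follows. The expected obstacle is merely bookkeeping: justifying the regrouping via absolute convergence and checking that at least one paired term is strictly positive at the endpoint $x=1$.
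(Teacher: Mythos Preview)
Your proof is correct and takes essentially the same approach as the paper: both establish $\varphi(x)>0$ on $[0,1]$ by exploiting the fact that the terms $|a_k x^k|$ are (non-strictly, then strictly) decreasing there, which makes the alternating series positive. The paper simply records the chain $1 \ge x > x^2/q_2 > x^3/(q_2^2 q_3) > \cdots$ and invokes the Leibniz criterion implicitly, whereas you spell out the pairwise grouping and handle the endpoint $x=1$ explicitly; the content is the same.
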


\begin{proof}  For $x \in [0,1]$ we have: 

$$1 \geq x  > \frac{x^2}{q_2} >  \frac{x^3}{q_2^2 q_3}  > 
\frac{x^4}{q_2^3 q_3^2 q_4 } > \cdots, 
$$

whence 
\begin{equation}
\label{mthm1.1}
 \varphi(x) >0 \quad \mbox{for all}\quad x\in [0,1].
\end{equation}

\end{proof}

\begin{Lemma}
\label{th:lm2}
Let $\varphi(x) =  1 - x + \sum_{k=2}^\infty  \frac{ (-1)^k x^k}
{q_2^{k-1} q_3^{k-2} \cdots q_{k-1}^2 q_k} $  be an entire function.
Suppose that  $q_k$ are non-decreasing in $k:$ $1 < q_2 \leq q_3 \leq q_4 \leq \cdots.$
 If there exists $x_0 \in (1, q_2]$ such that  $\varphi(x_0)\leq0,$ then for any 
 $n \in \mathbb{N},$ $S_{2n+1}(x_0) < 0.$
\end{Lemma}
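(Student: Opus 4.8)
The plan is to decompose $\varphi(x_0) = S_{2n+1}(x_0) + R_{2n+2}(x_0)$, to show that the remainder $R_{2n+2}(x_0)$ is strictly positive, and then to read off the conclusion from the hypothesis $\varphi(x_0) \le 0$. Set $t_k := \frac{x_0^k}{q_2^{k-1} q_3^{k-2} \cdots q_{k-1}^2 q_k}$ for $k \ge 2$, together with $t_0 = 1$ and $t_1 = x_0$, so that $\varphi(x_0) = \sum_{k=0}^\infty (-1)^k t_k$ and each $t_k > 0$. The first thing I would do is compute the ratio of consecutive terms, which telescopes to $\frac{t_k}{t_{k-1}} = \frac{x_0}{q_2 q_3 \cdots q_k}$ for every $k \ge 2$.

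The key step is to bound this ratio. Since $x_0 \le q_2$ by hypothesis and every $q_j > 1$, for each $k \ge 3$ one has $\frac{t_k}{t_{k-1}} = \frac{x_0}{q_2 q_3 \cdots q_k} \le \frac{1}{q_3 \cdots q_k} < 1$. Thus the positive numbers satisfy $t_2 > t_3 > t_4 > \cdots$; that is, the tail of the series is alternating with strictly decreasing moduli.

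I would then exploit this alternating structure. The remainder $R_{2n+2}(x_0) = \sum_{k=2n+2}^\infty (-1)^k t_k$ begins at the even index $2n+2 \ge 2$, so pairing consecutive terms gives $R_{2n+2}(x_0) = (t_{2n+2} - t_{2n+3}) + (t_{2n+4} - t_{2n+5}) + \cdots$, where every bracket is strictly positive by the monotonicity just established. Hence $R_{2n+2}(x_0) > 0$. That the series converges, so that this grouping is legitimate, is immediate since $\varphi$ is entire.

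Finally, combining $\varphi(x_0) = S_{2n+1}(x_0) + R_{2n+2}(x_0) \le 0$ with $R_{2n+2}(x_0) > 0$ yields $S_{2n+1}(x_0) \le - R_{2n+2}(x_0) < 0$, which is the assertion. I expect the only point needing care to be the monotonicity of $(t_k)_{k \ge 2}$ on the whole tail — establishing the ratio bound and checking that it covers every index appearing in $R_{2n+2}(x_0)$; the rest is the standard estimate for an alternating series with decreasing terms.
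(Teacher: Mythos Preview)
Your proof is correct and follows essentially the same route as the paper: decompose $\varphi(x_0)=S_{2n+1}(x_0)+R_{2n+2}(x_0)$, show the tail terms $t_k$ are strictly decreasing for $k\ge 2$ (the paper records this as the chain $x\ge x^2/q_2>x^3/(q_2^2q_3)>\cdots$, you via the ratio $t_k/t_{k-1}=x_0/(q_2\cdots q_k)<1$), invoke the Leibniz-type pairing to get $R_{2n+2}(x_0)>0$, and conclude. Your explicit remark that entirety of $\varphi$ legitimizes the pairing is a nice touch the paper leaves implicit.
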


\begin{proof}
Suppose that   $x \in (1, q_2].$  Then we obtain 
\begin{equation}
\label{mthm1.2}  1 <  x  \geq  \frac{x^2}{q_2} > \frac{x^3}{q_2^2 q_3} > 
\cdots >  \frac{  x^k}{q_2^{k-1} q_3^{k-2} \cdots q_{k-1}^2 q_k} > \cdots 
\end{equation}
For an arbitrary  $n \in {\mathbb{N}}$  we have:
\begin{align*}
\varphi(x) = S_{2n+1}(x, \varphi) + R_{2n+2}(x, \varphi).
\end{align*}
By (\ref{mthm1.2}) and the Leibniz criterion for alternating series, we conclude 
that $R_{2n+2}(x, \varphi) >0$ for all   $x \in (1, q_2],$
or
\begin{equation}
\label{mthm1.3}   \varphi(x) > S_{2n+1}(x, \varphi)\quad \mbox{for all} \quad 
x \in (1, q_2], n \in {\mathbb{N}}.
\end{equation} 
Consequently, if there exists a point $x_0 \in (1, q_2]$ such that $\varphi(x_0) \leq 0,$ 
then for any $n \in \mathbb{N}$ we have $S_{2n+1}(x_0) < 0.$
\end{proof}

Thus, we proved that if $\varphi \in \mathcal{L-P},$ then there exists  $x_0 \in (1, q_2]$ 
such that the inequalities $S_{2n+1}(x_0) < 0$ hold for any $n \in \mathbb{N}.$

In \cite{ngthv2} it was proved that if an entire function $\varphi(x)  = 1 - x + \sum_{k=2}^\infty  
\frac{ (-1)^k x^k}{q_2^{k-1} q_3^{k-2} \cdots q_{k-1}^2 q_k} $ belongs to the 
Laguerre--P\'olya  class, where $0 < q_2 \leq q_3 \leq q_4 \leq \cdots,$ then $q_2 \geq 3$  
(see\cite[Lemma 2.1]{ngthv2}). 
So  we assume that $q_2 \geq 3.$

\begin{Lemma}
\label{th:lm3}
Let $\varphi(x) =  1 - x + \sum_{k=2}^\infty  \frac{ (-1)^k x^k}
{q_2^{k-1} q_3^{k-2} \cdots q_{k-1}^2 q_k} $  be an entire function.
Suppose that   $ 3 \leq q_2 \leq q_3 \leq q_4 \cdots.$ Then
the inequality $S_{2n+1}(x, \varphi) \geq S_{2n+1}( \sqrt{q_{2n+1}} x, g_{\sqrt{q_{2n+1}}})$ 
holds for any $n \in \mathbb{N}$ and any $x \in (1, q_2]$ (here $g_a$ is the partial theta function 
and $S_{2n+1}(y, g_a)$ is its $(2n+1)$-th partial sum
at the point  $y$).
\end{Lemma}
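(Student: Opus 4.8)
The plan is to pass to the explicit coefficients of both sides and prove the inequality coefficient by coefficient, then exploit an alternating (Leibniz-type) cancellation. Write
\[
S_{2n+1}(x,\varphi)=\sum_{k=0}^{2n+1}(-1)^k b_k\, x^k,\qquad
b_k=\bigl(q_2^{k-1}q_3^{k-2}\cdots q_{k-1}^2 q_k\bigr)^{-1}\ (k\ge 2),\ \ b_0=b_1=1 .
\]
The partial theta function $g_a$ has all of its second quotients equal to $a^2$, so with $a=\sqrt{q_{2n+1}}$ the corresponding partial sum on the right-hand side carries the same $(-1)^k$ sign pattern and magnitudes $h_k=q_{2n+1}^{-k(k-1)/2}$; here one uses the identity $\sum_{j=2}^{k}(k-j+1)=k(k-1)/2$, so that a constant sequence of second quotients $Q$ yields the coefficient magnitude $Q^{-k(k-1)/2}$. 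Thus the assertion is $\sum_{k=0}^{2n+1}(-1)^k b_k x^k\ge \sum_{k=0}^{2n+1}(-1)^k h_k x^k$ on $(1,q_2]$, and one records at once that $b_0=h_0$ and $b_1=h_1$.

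First I would establish the pointwise bound $b_k\ge h_k$ for $0\le k\le 2n+1$. Since $3\le q_2\le q_3\le\cdots\le q_{2n+1}$, every factor $q_j$ with $2\le j\le k\le 2n+1$ satisfies $q_j\le q_{2n+1}$, whence
\[
q_2^{k-1}q_3^{k-2}\cdots q_k=\prod_{j=2}^{k}q_j^{\,k-j+1}\le \prod_{j=2}^{k}q_{2n+1}^{\,k-j+1}=q_{2n+1}^{\,k(k-1)/2},
\]
and taking reciprocals gives $b_k\ge h_k\ge 0$. Setting $e_k:=b_k-h_k\ge 0$ (with $e_0=e_1=0$), the difference of the two partial sums becomes $D(x):=\sum_{k=2}^{2n+1}(-1)^k e_k x^k$, which I would group into the $n$ consecutive pairs $(2j,2j+1)$:
\[
D(x)=\sum_{j=1}^{n}x^{2j}\bigl(e_{2j}-e_{2j+1}\,x\bigr).
\]
Because $e_{2j+1}\ge0$ and $x\le q_2$, each bracket is bounded below by $e_{2j}-q_2 e_{2j+1}$, so it suffices to prove the family of inequalities $(K_j)\colon\ b_{2j}-h_{2j}\ge q_2\bigl(b_{2j+1}-h_{2j+1}\bigr)$ for $j=1,\dots,n$.

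Proving $(K_j)$ is the heart of the argument and the step I expect to be the main obstacle. Using $b_{2j}=b_{2j+1}\,q_2q_3\cdots q_{2j+1}$ and $h_{2j}=h_{2j+1}\,q_{2n+1}^{\,2j}$, one rewrites $(K_j)$ as
\[
q_2\,b_{2j+1}\bigl(q_3q_4\cdots q_{2j+1}-1\bigr)\ \ge\ h_{2j+1}\bigl(q_{2n+1}^{\,2j}-q_2\bigr).
\]
The crude bound $b_{2j+1}\ge h_{2j+1}$ is \emph{not} enough here, since $q_2q_3\cdots q_{2j+1}\le q_{2n+1}^{\,2j}$; one must use the quotient $b_{2j+1}/h_{2j+1}=\prod_{i=2}^{2j+1}\bigl(q_{2n+1}/q_i\bigr)^{2j+2-i}$ quantitatively to absorb the gap between $q_2q_3\cdots q_{2j+1}$ and $q_{2n+1}^{\,2j}$. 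Clearing denominators turns $(K_j)$ into a polynomial inequality in $q_2,\dots,q_{2j+1},q_{2n+1}$ to be verified under $3\le q_2\le\cdots\le q_{2j+1}\le q_{2n+1}$, and this inequality is \emph{tight}: it degenerates to an equality when all the $q_i$ coincide (then $e_k\equiv 0$). The model case $j=1$, $q_{2n+1}=q_3$, already displays the mechanism, reducing to $q_3^3-(q_2+1)q_3^2+q_2^2\ge 0$, which vanishes at $q_3=q_2$ and is increasing in $q_3$ for $q_3\ge q_2\ge 3$ (its derivative $q_3\bigl(3q_3-2(q_2+1)\bigr)$ is positive there). I would treat the general $(K_j)$ in the same spirit: check equality on the diagonal $q_i\equiv q_{2n+1}$ and then show that lowering the variables $q_2,\dots,q_{2j+1}$ below $q_{2n+1}$ can only increase the left-minus-right difference, the hypotheses $q_2\ge 3$ and $q_2\le q_3\le\cdots$ guaranteeing the relevant derivatives keep their sign. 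With every $(K_j)$ in hand, the pairing shows $D(x)\ge 0$ on $(1,q_2]$, which is exactly the claim of the lemma.
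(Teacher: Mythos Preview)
Your proposal is correct and follows essentially the same route as the paper: both arguments group the $(2n+1)$st partial sum into the consecutive pairs of indices $(2k,2k+1)$ and reduce the lemma to the pairwise inequality $b_{2k}x^{2k}-b_{2k+1}x^{2k+1}\ge h_{2k}x^{2k}-h_{2k+1}x^{2k+1}$ (your $(K_j)$ is exactly this at the worst case $x=q_2$), and both establish that pairwise inequality by a monotonicity argument in the parameters $q_2,\dots,q_{2k+1}$ with equality on the diagonal. The paper just carries out the computation you sketch in full: it keeps $x\in(1,q_2]$ fixed, sets $F(q_2,\dots,q_{2k+1})=b_{2k}x^{2k}-b_{2k+1}x^{2k+1}$, and successively checks $\partial F/\partial q_i<0$ (using $x\le q_2$ and $q_i\ge 3$) to push $q_2\to q_3\to\cdots\to q_{2k+1}\to q_{2n+1}$; working with fixed $x$ rather than with $x=q_2$ substituted avoids the minor coupling you would otherwise have to untangle when differentiating in $q_2$.
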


\begin{proof}
We have
\begin{align}
\label{m5}
&S_{2n+1}(x, \varphi) = (1 - x) + \left(\frac{x^2}{q_2} - \frac{x^3}{q_2^2 q_3}\right)
+ \left(\frac{x^4}{q_2^3 q_3^2 q_4} - \frac{x^5}{q_2^4 q_3^3 q_4^2 q_5}\right)\\
&\nonumber + \cdots+ 
\left(\frac{x^{2n}}{q_2^{2n-1} q_3^{2n-2} \cdots 
q_{2n-1}^2 q_{2n}} - \frac{x^{2n+1}}{q_2^{2n} q_3^{2n-1} \cdots 
q_{2n}^2 q_{2n+1}}\right).
\end{align}
Under our assumptions,  $q_k$ 
are non-decreasing in $k.$  We prove that for any fixed $k = 1, 2, \ldots, n$ and 
$x \in (1, q_2],$  the following inequality holds:

\begin{align*}
& \frac{x^{2k}}{q_2^{2k-1} q_3^{2k-2}\cdots q_{2k-1}^2q_{2k}} - \frac{x^{2k+1}}{q_2^{2k} q_3^{2k-1}\cdots q_{2k-1}^3 q_{2k}^2 q_{2k+1}}  \\
& \quad \geq  \frac{x^{2k}}{q_{2k+1}^{2k-1}q_{2k+1}^{2k-2}\cdots q_{2k+1}^2 q_{2k+1}} - 
 \frac{x^{2k+1}}{q_{2k+1}^{2k} q_{2k+1}^{2k-1}\cdots q_{2k+1}^2q_{2k+1}} \\
&\qquad = \frac{x^{2k}}{q_{2k+1}^{k(2k-1)}} - \frac{x^{2k+1}}{q_{2k+1}^{k(2k+1)}} = 
\frac{x^{2k}}{q_{2k+1}^{k(2k-1)}}\cdot \left( 1 - \frac{x}{q_{2k+1}^{2k}}\right).
\end{align*}
For  $x \in (1, q_2]$ and any fixed $k=1, 2, \ldots , n,$ we define the following function:
\begin{multline*}
F(q_2, q_3, \ldots, q_{2k}, q_{2k+1}):=\frac{x^{2k}}{q_2^{2k-1} q_3^{2k-2}\cdots q_{2k-1}^2 q_{2k}} \\\ - \frac{x^{2k+1}}{q_2^{2k} q_3^{2k-1}\cdots q_{2k-1}^3 q_{2k}^2 
q_{2k+1}}.
\end{multline*}
We can observe that
\begin{multline*}
\frac{\partial F(q_2, q_3, \ldots, q_{2k}, q_{2k+1}) }
 {\partial q_2} = - \frac{(2k-1) \cdot x^{2k}}{q_2^{2k} q_3^{2k-2} \cdots q_{2k-1}^2 q_{2k}} \\+ 
 \frac{2k\cdot x^{2k+1}}{q_2^{2k+1}q_3^{2k-1} \cdots q_{2k-1}^3 q_{2k}^2q_{2k+1}} < 0
 \Leftrightarrow x < 
 \left(1 - \frac{1}{2k} \right) \cdot q_2 q_3 \ldots q_{2k}q_{2k+1}.
 \end{multline*}
Therefore, since    $\left(1 - \frac{1}{2k} \right) q_2 q_3 \cdots q_{2k}q_{2k+1} 
\geq \frac{1}{2}  q_2 q_3 \cdots q_{2k}q_{2k+1}  \geq \frac{1}{2}q_2q_3 > q_2$ (under our assumptions $q_3 \geq q_2 \geq 3$), 
we conclude that the function $F(q_2, q_3, \ldots, q_{2k}, q_{2k+1})$ 
is decreasing in $q_2$ for each fixed $x \in (1, q_2]$.  Since $q_2 \leq q_3,$  for $k=1$ we get:
$$ F(q_2, q_3) =  \frac{x^2}{q_2} 
- \frac{x^3}{q_2^2 q_3} \geq \frac{x^2}{q_3} 
- \frac{x^3}{q_3^2 q_3} = \frac{x^2}{q_3} - \frac{x^3}{q_3^3}, $$
and the desired inequality is proved for $k=1.$ For $k\geq 2$ we have:
\begin{align*}
&F(q_2, q_3, q_4, \ldots, q_{2k}, q_{2k+1}) \geq F(q_3, q_3, q_4,  \ldots, q_{2k}, q_{2k+1})\\
&\quad =\frac{x^{2k}}{q_3^{4k-3} q_4^{2k-3} \cdots q_{2k-1}^2 q_{2k}} - 
\frac{x^{2k+1}}{q_3^{4k-1}q_4^{2k-2} \cdots q_{2k-1}^3 q_{2k}^2 q_{2k+1}}.
\end{align*}
Further, we consider its derivative with respect to  $q_3$: 
\begin{multline*}
\frac{\partial F(q_3, q_3, q_4,  \ldots, q_{2k}, q_{2k+1})}
 {\partial q_3} = - \frac{(4k-3)\cdot x^{2k}}{q_3^{4k-2} q_4^{2k-3} \cdots q_{2k-1}^2 q _{2k}} 
 \\+ \frac{(4k-1) \cdot x^{2k+1}}{q_3^{4k}q_4^{2k-2} \cdots q_{2k+1}} < 0 
 \Leftrightarrow x < \frac{4k-3}{4k-1}q_3^2 q_4 \ldots q_{2k-1}^3 q_{2k}^2 q_{2k+1}. 
 \end{multline*}
Under our assumptions, $$\frac{4k-3}{4k-1}\cdot q_3^2 q_4 \ldots q_{2k+1}\geq \frac{5}{7}\cdot q_3^2 q_4q_{5}  
> q_2,$$ we obtain that $F(q_3, q_3, q_4, \ldots, q_{2k}, q_{2k+1})$ 
is decreasing in $q_3$ for each fixed $x \in (1, q_2]$ and, since $q_3 \leq q_4,$
we receive:
 $$F(q_3, q_3, q_4 \ldots, q_{2k}, q_{2k+1}) \geq F(q_4, q_4, q_4, q_5,  \ldots, q_{2k}, q_{2k+1}).$$
Thus, for the $l$th step we have:
 \begin{align*}
 &F(q_{l-1}, q_{l-1}, \ldots, q_{l-1}, q_l, q_{l+1}, \ldots, q_{2k}, q_{2k+1}) \\
 &\quad = \frac{x^{2k}}{q_{l-1}^{(4k-l+1)(l-2)/2}q_l^{2k - l +1} q_{l+1}^{2k-l} \cdots q_{2k-1}^2q_{2k}} \\ 
 &\qquad - \frac{x^{2k+1}}{q_{l-1}^{(4k-l+3)(l-2)/2} q_l^{2k - l + 2} q_{l+1}^{2k - l + 1} \cdots q_{2k-1}^3 q_{2k}^2 q_{2k+1}}.
 \end{align*}
 We consider its partial derivative with respect to $q_{l-1}:$
 \begin{align*}
 &\frac{\partial F(q_{l-1}, q_{l-1}, \ldots, q_{l-1}, q_l, q_{l+1}, \ldots, q_{2k}, q_{2k+1})}{\partial q_{l-1}} \\
 &\quad = - \frac{\frac{1}{2}(4k-l+1)(l-2) \cdot x^{2k}}{q_{l-1}^{1+ (4k-l+1)(l-2)/2}q_l^{2k - l +1} q_{l+1}^{2k-l} \cdots q_{2k-1}^2q_{2k}} \\
 &\qquad + \frac{\frac{1}{2}(4k-l+3)(l-2) \cdot x^{2k+1}}{q_{l-1}^{1+ (4k-l+3)(l-2)/2}q_l^{2k - l + 2} q_{l+1}^{2k - l + 1} \cdots q_{2k-1}^3 q_{2k}^2 q_{2k+1}} < 0,
 \end{align*}
 which is equivalent to the inequality:
 \begin{align*}
 x < \frac{4k - l + 1}{4k - l + 3} \cdot q_{l-1}^{l-2} q_l q_{l+1} \cdots q_{2k-1} q_{2k}q_{2k + 1}. 
 \end{align*}
 The inequality above is valid, since 
 \begin{align*}
 &\frac{4k - l + 1}{4k - l + 3} \cdot q_{l-1}^{l-2} q_l q_{l+1} \cdots q_{2k-1} q_{2k}q_{2k + 1}\\
 &\quad \geq \frac{9 - l}{11 - l} \cdot q_{l-1}^{l-2} q_l q_{l+1} \cdots q_{2k-1} q_{2k}q_{2k + 1} > q_2.
 \end{align*}
 Hence, the function $F(q_{l-1}, q_{l-1}, \ldots, q_{l-1}, q_l, q_{l+1}, \ldots, q_{2k}, q_{2k+1})$ is decreasing in $q_{l-1}.$ Since, under our assumptions, $q_{l-1} \leq q_l,$ we obtain:
 \begin{multline*}
 F(q_{l-1}, q_{l-1}, \ldots, q_{l-1}, q_l, q_{l+1}, \ldots, q_{2k}, q_{2k+1}) \\ \geq
 F(q_l, q_l, \ldots, q_l, q_{l+1}, \ldots, q_{2k}, q_{2k+1}).
 \end{multline*}
 Analogously, by the same computation, at the $(2k+1)$-th step we get:
\begin{align*}
F(q_{2k}, q_{2k} \ldots, q_{2k}, q_{2k+1}) = \frac{x^{2k}}{q_{2k}^{k(2k-1)}} - \frac{x^{2k+1}}{q_{2k}^{(k+1)(2k-1)} \cdot q_{2k+1}}.
\end{align*}
Its derivative with respect to $q_{2k}$ is:
\begin{align*}
&\frac{\partial F(q_{2k}, q_{2k} \ldots, q_{2k}, q_{2k+1})}{\partial q_{2k}} = -\frac{k(2k-1)\cdot x^{2k}}{q_{2k}^{2k^2 -k+1}} \\
&\quad + \frac{(2k^2 + k - 1) \cdot x^{2k+1}}{q_{2k}^{2k^2+k}q_{2k+1}} < 0
 \Leftrightarrow x < \frac{2k^2 - k}{2k^2 + k -1} \cdot q_{2k}^{2k-1} q_{2k +1}.
\end{align*}
Since we assume that 
$$\frac{2k^2 - k}{2k^2 + k -1} \cdot q_{2k}^{2k-1} q_{2k +1} \geq \frac{2}{3} \cdot q_{2k}^{2k-1} q_{2k +1} > q_2,$$
we conclude that the function $F(q_{2k}, q_{2k} \ldots, q_{2k}, q_{2k+1})$ is decreasing in $q_{2k}$. While $q_{2k} \leq q_{2k+1},$ we get:
$$F(q_{2k}, q_{2k} \ldots, q_{2k}, q_{2k+1}) \geq F(q_{2k+1}, q_{2k+1}, \ldots,  q_{2k+1}, q_{2k+1}).$$
Thus, we obtain the following chain of inequalities:
\begin{align*}
&F(q_2, q_3, q_4, \ldots, q_{2k}, q_{2k+1}) \geq F(q_3, q_3, q_4,  \ldots, q_{2k}, q_{2k+1}) \\ 
& \quad \geq F(q_4, q_4, q_4, q_5,  \ldots, q_{2k}, q_{2k+1}) \geq \cdots  
 \geq F(q_{2k}, q_{2k}, \ldots,  q_{2k}, q_{2k+1})\\
& \quad \geq F(q_{2k+1}, q_{2k+1}, \ldots,  q_{2k+1}, q_{2k+1}).
 \end{align*}
Consequently, 
\begin{align*}
&F(q_2, q_3, q_4, \ldots, q_{2k}, q_{2k+1}) \geq F(q_{2k+1}, q_{2k+1}, \ldots,  q_{2k+1}, q_{2k+1})\\
& \quad = \frac{x^{2k}}{q_{2k+1}^{k(2k-1)}} - \frac{x^{2k+1}}{q_{2k+1}^{k(2k+1)}}.
\end{align*} 
Finally, we note that under our assumptions,  the expression $\frac{x^{2k}}{q_{2k+1}^{k(2k-1)}} - 
\frac{x^{2k+1}}{q_{2k+1}^{k(2k+1)}}$ is decreasing in $q_{2k+1}$ for each fixed $x \in (1, q_2],$
so we obtain
\begin{align*}
F(q_2, q_3, q_4, \ldots, q_{2k}, q_{2k+1}) 
 \geq \frac{x^{2k}}{q_{2k+1}^{k(2k-1)}} - \frac{x^{2k+1}}{q_{2k+1}^{k(2k+1)}} \geq 
 \frac{x^{2k}}{q_{2n+1}^{k(2k-1)}} - \frac{x^{2k+1}}{q_{2n+1}^{k(2k+1)}}.
\end{align*} 
Substituting the last inequality in (\ref{m5}) for every  $x \in (1, q_2]$ and $k=1, 2, \ldots , n,$ we get:
\begin{align}
\label{mthm1.4}
&S_{2n+1}(x, \varphi) \geq (1 - x) + \left(\frac{x^2}{q_{2n+1}} 
- \frac{x^3}{q_{2n+1}^3}\right)
+ \left(\frac{x^4}{q_{2n+1}^6} - \frac{x^5}{q_{2n+1}^{10}}\right)+ \\
&   \nonumber \cdots+ \left(\frac{x^{2n}}{q_{2n+1}^{n(2n-1)}} - \frac{x^{2n+1}}{q_{2n+1}^{n(2n+1)} }\right) 
= \sum_{k=0}^{2n+1} \frac{(-1)^kx^k}{\sqrt{q_{2n+1}}^{k(k-1)}} \\
& \nonumber = S_{2n+1}( - \sqrt{q_{2n+1}} x, g_{\sqrt{q_{2n+1}}}),
\end{align}
where $g_a$ is the partial theta function and $S_{2n+1}(y, g_a)$ is its $(2n+1)$-th partial sum
at the point  $y$. 
\end{proof}

Since we have $S_{2n+1}(x, \varphi) \geq S_{2n+1}(  - \sqrt{q_{2n+1}} x, g_{\sqrt{q_{2n+1}}})$ for 
any $n \in \mathbb{N},$  if there exists a point $x_0 \in (1, q_2]$ such that $S_{2n+1}(x_0, \varphi) \leq 0,$ then 
$S_{2n+1}( - \sqrt{q_{2n+1}} x_0, g_{\sqrt{q_{2n+1}}})< 0.$ Therefore for  $y_0 =  \sqrt{q_{2n+1}} x_0,$ 
we have  $\sqrt{q_{2n+1}} \leq y_0 \leq \sqrt{q_{2n+1}} q_2 \leq (\sqrt{q_{2n+1}})^3.$ 
Using the statement (5) of Theorem C, we obtain that  $q_{2n+1} > c_{2n+1},$ which completes 
the proof of Theorem \ref{th:mthm1}.

{\it Proof of Corollary \ref{Cor1}.} As we have proved in the previous theorem, if 
$f \in \mathcal{L-P}$, then $q_3(f) > 3.$ In \cite{ngthv3} it is proved that,
under the assumptions of the Corollary, if $q_2(f)  < 4, $
then 
$$q_3(f) \leq \frac{-q_2(f)(2q_2(f) -9)+ 2(q_2(f) -3)\sqrt{q_2(f)(q_2(f)-3)}}{q_2(f)(4 - q_2(f))}$$
(see \cite[Theorem 1.4]{ngthv3}).
We have mentioned that if $f \in \mathcal{L-P}$, then $q_2(f) \geq 3.$ If $q_2(f) = 3,$
then the inequality above states $q_3(f) \leq  3.$ This contradiction proves the Corollary \ref{Cor1}.
$\Box$
\

\section{Proof of Theorem \ref{th:mthm2}}

As in the proof of Theorem \ref{th:mthm1} we assume that $a_0=a_1=1,$
and we consider the function $\varphi(x) = f(-x) =  1 - x + \sum_{k=2}^\infty  \frac{ (-1)^k x^k}
{q_2^{k-1} q_3^{k-2} \cdots q_{k-1}^2 q_k}$ instead of $f.$ We need the following lemma.

\begin{Lemma}
\label{th:lm4}
Let $\varphi(x) =  1 - x + \sum_{k=2}^\infty  \frac{ (-1)^k x^k}
{q_2^{k-1} q_3^{k-2} \cdots q_{k-1}^2 q_k} $  be an entire function.
Suppose that    $1 <  q_2 \leq q_3 \leq q_4 \leq \cdots.$
If there exist $j_0 = 3, 4, \ldots$ and $m_0 \in \mathbb{N},$   
such that  $q_{j_0} \geq c_{2m_0},$  then
for all  $j \geq j_0 +2 m_0 -3,$ there exists $ x_j  \in (q_2q_3 \cdots  q_j, q_2q_3 \cdots  q_j q_{j+1})$ 
such that  the following inequality holds: 
$$  (-1)^j \varphi(x_j) \geq 0.$$ 
\end{Lemma}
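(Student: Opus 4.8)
The plan is to localize $\varphi$ near the dominant index $j$. Writing $p_i := q_2 q_3\cdots q_i$ (so that $p_i = a_{i-1}/a_i$ and the target interval is exactly $(p_j,p_{j+1})$), I first factor out the leading term: since $(-1)^j\varphi(x)=\sum_{k\ge 0}(-1)^{j+k}a_k x^k$, the substitution $m=k-j$ gives
\[
(-1)^j\varphi(x)=a_j x^j\,\Phi_j(x),\qquad
\Phi_j(x):=\sum_{m\ge -j}(-1)^m\frac{a_{j+m}}{a_j}\,x^m .
\]
It therefore suffices to find a point $x_j\in(p_j,p_{j+1})$ with $\Phi_j(x_j)\ge 0$. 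Putting $x=p_j s$ with $s\in(1,q_{j+1})$ expresses the coefficients through the local quotients: the terms with $m>0$ involve $q_{j+1},q_{j+2},\dots$, those with $m<0$ involve $q_j,q_{j-1},\dots$, and for $x\in(p_j,p_{j+1})$ every $a_kx^k$ is strictly smaller than the central term $a_jx^j$, so both one-sided pieces of $\Phi_j$ decrease in $|m|$.

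Next I would split $\Phi_j$ into a finite central block around $m=0$ plus the two remaining tails, choosing the block so that the first term of each tail falls at an even value of $m$. Since $q_n>1$ and the quotients are non-decreasing, the quantities $(a_{j+m}/a_j)x^m$ decrease as $|m|$ grows, so each tail is an alternating series beginning with a positive term and is therefore nonnegative by the Leibniz criterion. This reduces the claim to the nonnegativity of the finite central block at a suitable $x_j$. For $m_0=1$ the block $\{m=-1,0,1\}$ already suffices: there $\Phi_j(x)\ge 1-\frac1s-\frac{s}{q_{j+1}}$, and since $q_{j+1}\ge c_2=4$ — which is exactly what $j\ge j_0+2m_0-3$ gives when $m_0=1$ — the choice $s=2$, i.e.\ $x_j=2p_j\in(p_j,p_{j+1})$, yields $\Phi_j(x_j)\ge 0$.

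For general $m_0$ the three-term estimate is too weak, because $c_{2m_0}<4$, and this is where the partial theta function enters. The plan is to compare the central block with the Taylor section $S_{2m_0}(\cdot,g_a)$ of the partial theta function with $a^2=c_{2m_0}$. Using the monotonicity in each quotient of the relevant paired differences, exactly as in the proof of Lemma~\ref{th:lm3}, together with the fact that every quotient entering the block has index $\ge j_0$ and is hence $\ge c_{2m_0}$ (this is precisely what the hypothesis $j\ge j_0+2m_0-3$ and the monotonicity of $(q_n)$ guarantee), I would bound the block from below by the corresponding expression for the pure partial theta, in which all quotients equal $c_{2m_0}$. By statement~(4) of Theorem~C, at $a^2=c_{2m_0}$ the section $S_{2m_0}(z,g_a)$ is nonpositive at some $z^\ast\in(-a^3,-a)$; taking $x_j$ to be the rescaled image of $z^\ast$, the sign factor $(-1)^j$ converts this nonpositivity into $\Phi_j(x_j)\ge 0$. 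Finally one checks $x_j\in(p_j,p_{j+1})$, which holds because $z^\ast\in(-a^3,-a)$ corresponds to $s\in(1,q_{j+1})$.

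The main obstacle is the two-sided nature of $\Phi_j$: unlike the one-sided sections treated in Lemma~\ref{th:lm3}, one must control the terms on both sides of the dominant index at once and, above all, get the direction of the monotonicity right, so that the hypothesis $q_n\ge c_{2m_0}$ produces a lower bound — larger quotients push $(-1)^j\varphi$ toward positivity — rather than an upper bound. The second delicate point is the index and parity bookkeeping that yields the exact shift $2m_0-3$: one must align the central block of consecutive terms around index $j$ with the $2m_0$ terms of $S_{2m_0}$ so that every quotient used in the comparison has index at least $j_0$, while simultaneously placing the block boundaries so that the Leibniz tails start with positive terms and hence never overturn the sign obtained from the finite comparison.
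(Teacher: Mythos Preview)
Your plan is correct and matches the paper's proof essentially step for step: the paper also fixes $j\ge j_0+2m_0-3$, splits $(-1)^j\varphi(x)$ on $(p_j,p_{j+1})$ into two alternating tails (both positive, by the Leibniz-type argument you describe) plus a central block of $2m_0+1$ consecutive terms, namely $k=j-2m_0+1,\dots,j+1$, and then bounds that block below by $-S_{2m_0}$ of a partial theta section, invoking Theorem~C(4) to locate the point. The only cosmetic differences are that the paper factors out the term at $k=j+1$ and substitutes $y=p_{j+1}/x$ (which turns the asymmetric block into a one-sided sum with \emph{decreasing} renamed quotients $s_2\ge\cdots\ge s_{2m_0}$, making the monotonicity direction transparent), and compares with $a^2=q_{j-2m_0+3}$ rather than your $a^2=c_{2m_0}$; since $q_{j-2m_0+3}\ge c_{2m_0}$ and the relevant expressions are increasing in each quotient, either choice works. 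Your identification of the shift $2m_0-3$ as exactly the condition ensuring the smallest quotient in the block satisfies $q_{j-2m_0+3}\ge q_{j_0}\ge c_{2m_0}$ is precisely what the paper uses.
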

The proof of this lemma is similar to the one of \cite[Lemma 2.1]{ngthv1}.

\begin{proof}   Choose an arbitrary $j \geq j_0 +2m_0 - 3$ and fix this $j.$ For every 
$x \in (q_2q_3 \cdots  q_j, q_2q_3 \cdots  q_j q_{j+1})$ we have
$$ 1 < x < \frac{x^2}{q_2}< \frac{x^3}{q_2^2 q_3}  < \cdots < \frac{  x^j}
{q_2^{j-1} q_3^{j-2} \cdots q_{j-1}^2 q_j} ,$$
and
$$ \frac{  x^j}
{q_2^{j-1} q_3^{j-2} \cdots q_{j-1}^2 q_j} > \frac{  x^{j+1}}
{q_2^{j} q_3^{j-1} \cdots q_{j-1}^3 q_j^2 q_{j+1}} $$
$$ > \frac{  x^{j+2}}
{q_2^{j+1} q_3^{j} \cdots q_{j-1}^4 q_j^3 q_{j+1}^2 q_{j+2}} > \cdots .$$
We observe that 
$$ (-1)^j \varphi(x)  =  \sum_{k=0}^{j-2m_0 } \frac{ (-1)^{k+j} x^k}
{q_2^{k-1} q_3^{k-2} \cdots q_{k-1}^2 q_k} + 
\sum_{k=j - 2m_0 +1}^{j+1} \frac{ (-1)^{k+j} x^k}
{q_2^{k-1} q_3^{k-2} \cdots q_{k-1}^2 q_k}  $$
$$+ \sum_{k=j+2 }^{\infty} \frac{ (-1)^{k+j} x^k}
{q_2^{k-1} q_3^{k-2} \cdots q_{k-1}^2 q_k} =: \Sigma_{1}(x) + h(x) + \Sigma_{2}(x). $$

Summands in $\Sigma_{1}(x)$ are increasing in modulus and the sign of the last (biggest)
summand is positive. So, for all $x \in (q_2q_3 \cdots  q_j, q_2q_3 \cdots  q_j q_{j+1}),$ we have
$\Sigma_{1}(x) >0.$  Summands in $\Sigma_{2}(x)$ are decreasing in modulus and the sign of the 
first (biggest) summand is positive. Consequently, for all $x \in (q_2q_3 \cdots  q_j, q_2q_3 \cdots  q_j q_{j+1}),$ we get
$\Sigma_{2}(x) >0.$ Thus, we obtain
\begin{eqnarray}
\label{e1} &   (-1)^j \varphi(x) > h(x) = \sum_{k=j-2m_0 +1 }^{j +1} \frac{ (-1)^{k+j} x^k}
{q_2^{k-1} q_3^{k-2} \ldots q_{k-1}^2 q_k}  \\
\nonumber & = - \frac{x^{j+1}}{q_2^{j} q_3^{j-1 } \ldots q_{j}^2 q_{j+1}}   +
 \frac{x^{j}}{q_2^{j-1} q_3^{j-2 } \ldots q_{j-1}^2 q_{j}} - 
  \frac{x^{j-1}}{q_2^{j-2} q_3^{j-3 } \ldots q_{j-2}^2 q_{j-1}}    \\
\nonumber &  +  \ldots + \frac{x^{j-2m_0 +2}}{q_2^{j-2m_0 + 1 } q_3^{j-2m_0} \ldots q_{j-2m_0 +1 }^2 q_{j-2m_0 +2}}  
- \frac{x^{j-2m_0 + 1}}{q_2^{j-2m_0} q_3^{j-2m_0 -1 } \ldots q_{j-2m_0 }^2 q_{j-2m_0 + 1 }} 
\end{eqnarray} 
(we rewrite the sum from the end to the beginning). After factoring out the term $\frac{  x^{j +1}}
{q_2^{j} q_3^{j-1} \ldots q_{j}^2 q_{j +1}},$ we get
\begin{eqnarray}
\label{ee1} & (-1)^j \varphi(x) > h(x) = \frac{  x^{j +1}}
{q_2^{j} q_3^{j-1} \cdots q_{j}^2 q_{j +1}} \cdot \left(- 1 + \right.   \frac{q_2 q_3 \cdots q_{j} q_{j +1}}{x} 
\\
\nonumber  &
-  \frac{(q_2 q_3 \cdots q_{j} q_{j + 1})^2}{x^2 q_{j +1}}    + 
 \frac{(q_2 q_3 \cdots q_{j} q_{j +1})^3}{x^3 q_{j+1}^2 q_{j}} -
\ldots   \\
\nonumber  &    + \frac{(q_2 q_3 \cdots q_{j} 
q_{j + 1})^{2m_0 -1}}{x^{2m_0 -1} q_{j+1}^{2m_0 -2} q_{j}^{2m_0 -3}
\cdots  q_{j-2m_0+5}^2 q_{j-2m_0+4}}\\
\nonumber  &  \left.  -   \frac{(q_2 q_3 
\cdots q_{j} q_{j+1})^{2m_0 }}{x^{2m_0 } q_{j+1}^{2m_0 -1} q_{j }^{2m_0 -2}
\cdots  q_{j-2m_0+5}^3 q_{j-2m_0+4}^2 q_{j-2m_0+3} }   \right)\\
\nonumber & =: \frac{x^{j +1}}
{q_2^{j} q_3^{j-1} \cdots q_{j}^2 q_{j +1}} \cdot \psi(x).   
\end{eqnarray}

Now we introduce some more notation. Set $y := \frac{q_2 q_3 \ldots q_{j} q_{j +1}}{x}, $
and observe that  $x \in (q_2q_3 \cdots  q_j, q_2q_3 \cdots  q_j q_{j+1})  \Leftrightarrow
 y \in (1, q_{j+1}).$ Further we change the numeration of the second quotients:
$$s_2 := q_{j+1}, \  s_3 := q_j, \  s_4 :=  q_{j-1}, \   \ldots , s_{2m_0 -1} := q_{j-2m_0 +4}, \   
s_{2m_0 } := q_{j-2m_0 +3}.  $$  
By our assumptions, $q_2 \leq q_3 \leq q_4 \leq \cdots,$ thus, we get
$s_2 \geq s_3 \geq s_4 \geq \cdots \geq s_{2m_0 } > 1,$ and $y \in (1, s_2).$ In new notation we have

\begin{equation}
\label{e2} \psi(y) =  - 1 + y - \sum_{k=2}^{2m_0}\frac{ (-1)^k y^k}
{s_2^{k-1} s_3^{k-2} \cdots s_{k-1}^2 s_k} .    
\end {equation}
We want to prove that there exists a point $ y_j \in (1, q_{j+1}) = (1, s_2)$ such that
$h(y_j) \geq 0.$ To do this we compare the expression in brackets with the
corresponding partial sum of the partial theta function. We have 
\begin{eqnarray}
\label{e33}
& \psi(y) = (- 1 + y) \\
\nonumber &  + \left(- \frac{y^2}{s_2} + \frac{y^3}{s_2^2 s_3} \right) 
 +\left(- \frac{y^4}{s_2^3 s_3^2 s_4} + \frac{y^5}{s_2^4 s_3^3 s_4^2 s_5} \right)\\
\nonumber &  + \cdots + \left(- \frac{  y^{2m_0 -2}}
{s_2^{2m_0 -3} s_3^{2m_0-4} \cdots s_{2m_0 -3}^2 s_{2m_0 -2}}  + 
\frac{  y^{2m_0 -1}}
{s_2^{2m_0 -2} s_3^{2m_0-3} \cdots s_{2m_0 -2}^2 s_{2m_0 -1}}\right)  \\
\nonumber &  - \frac{  y^{2m_0}}
{s_2^{2m_0 -1} s_3^{2m_0-2} \cdots s_{2m_0 -2}^3 s_{2m_0 -1}^2 s_{2m_0}}.  
\end{eqnarray} 
We provide estimations similar to those in the proof of  Lemma \ref{th:lm3}.
Firstly, under our assumptions, one can see that
\begin{multline}
\label{e4} - \frac{  y^{2m_0}}
{s_2^{2m_0 -1} s_3^{2m_0-2} \cdots  s_{2m_0 -1}^2 s_{2m_0}}\\ \geq
- \frac{  y^{2m_0}}
{s_{2 m_0}^{2m_0 -1} s_{2 m_0}^{2m_0-2} \cdots  s_{2 m_0}^2 s_{2 m_0}} = - \frac{y^{2m_0}}{s_{2m_0}^{m_0(2m_0 - 1)}}. 
\end{multline}
We prove that for any fixed $k = 1, 2, \ldots, m_0-1,$ the following inequality holds:
\begin{align}
\label{mm6} 
&-\frac{y^{2k}}{s_2^{2k-1}s_3^{2k-2}\cdots s_{2k}} +
 \frac{y^{2k+1}}{s_2^{2k}s_3^{2k-1}\cdots s_{2k}^2 
 s_{2k+1}} \\ 
 &\nonumber \geq -\frac{y^{2k}}{s_{2 m_0}^{2k-1}s_{2 m_0}^{2k-2}\cdots s_{2 m_0}} 
+ \frac{y^{2k+1}}{s_{2 m_0}^{2k} s_{2 m_0}^{2k-1}\cdots s_{2 m_0}^2 s_{2 m_0}} \\
& \nonumber \quad = -\frac{y^{2k}}{s_{2m_0}^{k(2k-1)}} + \frac{y^{2k+1}}{s_{2m_0}^{k(2k+1)}}.
\end{align}
Firstly, we consider (\ref{mm6}) for $k = 1.$  Since $s_2 \geq s_3$, we have
$$- \frac{y^2}{s_2} + \frac{y^3}{s_2^2 s_3} 
\geq  - \frac{y^2}{s_2} + \frac{y^3}{s_2^3}.$$
We observe that
$$\frac{\partial }{\partial s_2}\left(  - \frac{y^2}{s_2} + 
\frac{y^3}{s_2^3} \right) = 
\frac{y^2}{s_2^2} - \frac {3y^3}{s_2^4} > 0  \Leftrightarrow y  
< \frac{s_2^2}{3}.$$
The inequality above is valid since $y < q_{j+1} = s_2,$ and we suppose that if there exist $j_0 = 2, 3, 4, \ldots$ and $m_0 \in \mathbb{N},$ such that $q_{j_0} \geq c_{2m_0},$ we fix an arbitrary $j \geq j_0 + 2m_0 - 3$ and get  
$s_2 \geq s_{2m_0} =q_{j -2m_0 +3}  \geq  q_{j_0}\geq c_{2m_0}>3.$ Therefore, the function  $\left(  - \frac{y^2}{s_2} + \frac{y^3}{s_2^3} \right)$ is 
increasing in $s_2,$ whence 
\begin{equation}
\label{mm7}
- \frac{y^2}{s_2} + \frac{y^3}{s_2^2 s_3} 
\geq - \frac{y^2}{s_2} + \frac{y^3}{s_2^3} \geq - 
\frac{y^2}{s_{2 m_0}} + \frac{y^3}{s_{2 m_0}^3}.
\end{equation} 
We apply analogous reasoning to prove (\ref{mm6}) for every $k = 1, 2, \ldots, m_0-1$. 
Let us define the following function:
\begin{multline*}
H (s_2, s_3, \ldots , s_{2k}, s_{2k+1}) :=
 -\frac{y^{2k}}{s_2^{2k-1} s_3^{2k-2}\cdots s_{2k-1}^2 s_{2k}} \\+
 \frac{y^{2k+1}}{s_2^{2k} s_3^{2k-1}\cdots s_{2k-1}^3 s_{2k}^2 
 s_{2k+1}}
 \end{multline*}
 for $s_2 \geq s_3 \geq \cdots \geq s_{2k+1}.$
Obviously,
\begin{align*}
&H(s_2, s_3, \ldots , s_{2k}, s_{2k+1})  \geq H(s_2, s_3, \ldots , s_{2k}, s_{2k}) \\
& \quad = -\frac{y^{2k}}{s_2^{2k-1} s_3^{2k-2}\cdots s_{2k-1}^2 s_{2k}} +
 \frac{y^{2k+1}}{s_2^{2k} s_3^{2k-1}\cdots s_{2k-1}^3s_{2k}^3}.
 \end{align*}
We have
$$\frac{\partial H(s_2, s_3, \ldots , s_{2k}, 
s_{2k}) }{\partial s_{2k}} = \frac{y^{2k}}{s_2^{2k-1}
s_3^{2k-2}\cdots s_{2k-1}^2 s_{2k}^2} - 
\frac{3y^{2k+1}}{s_2^{2k} s_3^{2k-1}\cdots
s_{2k-1}^3 s_{2k}^4}.$$
Thus,
$$  \frac{\partial H(s_2, s_3, \ldots , s_{2k}, 
s_{2k}) }{\partial s_{2k}}  > 0 
\Leftrightarrow y  < \frac{s_2  s_3 \cdots
s_{2k-1}s_{2k}^2}{3}.$$
Since $y \in (1, s_2) \Leftrightarrow y < s_2,$  we obtain that the function 
$H(s_2, s_3, \ldots , s_{2k}, 
s_{2k})$ is increasing in $s_{2k},$
whence 
\begin{align*}
& H(s_2, s_3, \ldots, s_{2k-1}, s_{2k}, s_{2k+1})  \geq 
H(s_2, s_3, \ldots, s_{2k-1}, s_{2k}, s_{2k}) \\
& \geq  H(s_2, s_3, \ldots, s_{2k-1}, s_{2 m_0}, s_{2 m_0}) = -\frac{y^{2k}}{s_2^{2k-1} s_3^{2k-2}\cdots s_{2k-1}^2 s_{2m_0}} \\
& \quad + \frac{y^{2k+1}}{s_2^{2k} s_3^{2k-1}\cdots s_{2k-1}^3s_{2m_0}^3}.
\end{align*}
Now we consider the derivative of the latter function:
\begin{multline*}
\frac{\partial H(s_2, s_3, \ldots , s_{2k-1}, s_{2 m_0}, s_{2 m_0}) }{\partial 
s_{2k-1}} \\ = \frac{2y^{2k}}{s_2^{2k-1}s_3^{2k-2}\cdots s_{2k-1}^3 s_{2 m_0}} 
 - \frac{3y^{2k+1}}{s_2^{2k}
s_3^{2k-1}\cdots s_{2k-1}^4 s_{2 m_0}^3}.
\end{multline*}
Hence,
$$ \frac{\partial H(s_2, s_3, \ldots , s_{2k-1},  s_{2 m_0},  s_{2 m_0}) }{\partial s_{2k-1}}  
> 0  \Leftrightarrow y  < \frac{2 s_2  s_3 \cdots
s_{2k-1}s_{2k-1}  s_{2 m_0}^2}{3}.$$
The inequality above is valid since $y < s_2,$ therefore,  we obtain that the function 
$H(s_2, s_3, \ldots , s_{2k-1}, s_{2 m_0}, s_{2 m_0}) $ is increasing in 
$s_{2k-1},$
whence 
\begin{multline*}
H(s_2, s_3, \ldots, s_{2k-2}, s_{2k-1}, s_{2 m_0}, s_{2 m_0})\geq   
H(s_2, s_3, \ldots, s_{2k-2}, s_{2 m_0}, s_{2 m_0}, s_{2 m_0})\\
=-\frac{y^{2k}}{s_2^{2k-1} s_3^{2k-2}\cdots s_{2k-2}^3 s_{2m_0}^3}  + \frac{y^{2k+1}}{s_2^{2k} s_3^{2k-1}\cdots s_{2k-2}^4 s_{2m_0}^6}.
\end{multline*}
Applying similar arguments we get the following chain of inequalities.
$$H(s_2, s_3, \ldots , s_{2k}, s_{2k+1}) 
 \geq   H(s_2, s_3, \ldots , s_{2k-1}, s_{2 m_0}, s_{2 m_0})\geq $$
$$H(s_2, s_3, \ldots , s_{2k-2}, s_{2 m_0}, s_{2 m_0}, s_{2 m_0}) \geq \ldots \geq
H(s_{2 m_0}, s_{2 m_0}, \ldots, s_{2 m_0}, s_{2 m_0}).$$
Thus, we have proved (\ref{mm6}).

We substitute the inequality (\ref{e4}) and  (\ref{mm6}) into  (\ref{e33}) to get the following
\begin{equation}
\label{mm8}\psi(y) \geq - \sum_{k=0}^{2m_0} \frac{(-1)^k y^k}{s_{2 m_0}^{\frac{k(k-1)}{2}}}=  
- S_{2m_0}(- \sqrt{s_{2 m_0}}y, g_{\sqrt{s_{2 m_0}}}),
\end{equation}
where  $g_a$ is a partial theta function and $S_{n}(x, g_a):=\sum _{j=0}^{n} x^j a^{-j^2}$ is its
partial sum.  By our assumption $ (\sqrt{s_{2 m_0}})^2 =s_{2 m_0} = q_{j-2m_0 +3}$ 
and $j \geq j_0 +2 m_0 -3,$  so $ s_{2 m_0} = q_{j-2m_0 +3} \geq q_{j_0} \geq c_{2m_0},$ 
and we conclude that  $S_{2m_0}(x, g_{s_{2 m_0}}) \in \mathcal{L-P}$ (see Theorem C). 
Whence, by part (4) of Theorem C,  there exists $x_0 \in (-  (\sqrt{s_{2 m_0}})^3, - \sqrt{s_{2 m_0}})$ such that 
$S_{2m_0}(x_0, g_{s_{2 m_0}}) \leq 0.$ We put $- \sqrt{s_{2 m_0}}y_0 := x_0,$
i.e. $y_0 := - \frac{x_0 }{\sqrt{s_{2 m_0}}} \in (1, s_{2 m_0}) \subset (1, s_2),$ and we have 
$$S_{2m_0}(- \sqrt{s_{2 m_0}}y_0, g_{\sqrt{s_{2 m_0}}}) \leq 0.$$
Substituting the last inequality in (\ref{mm8}) we obtain: 
\begin{equation}
\label{mm9}
\psi(y_0) \geq - S_{2m_0}(- \sqrt{s_{2 m_0}}y_0, g_{\sqrt{s_{2 m_0}}}) \geq 0.
\end{equation}
Using (\ref{mm9}) and substituting (\ref{mm8}) into (\ref{ee1}), we get:
\begin{align*}
(-1)^j \psi(x) > h(x) = \frac{x^{j+1}}{q_2^j q_3^{j-1} \cdots q_j^2 q_{j+1}} \cdot \psi(y_0) \geq 0,
\end{align*}
which is  the desired inequality.
It remains to recall that $x_j   := \frac{q_2 q_3 \ldots q_{j} q_{j +1}}{y_0}, $
and, since $y_0\in (1, s_2) = (1, q_{j+1}),$ we have $x_j \in (q_2 q_3 \ldots q_{j} , 
q_2 q_3 \ldots q_{j} q_{j +1}). $
\end{proof}

Now we apply the following lemma.

\begin{Lemma}
\label{th:lm5} (\cite[Lemma 2.1]{ngthv4}). {\it  Let $f(z) = \sum_{k=0}^\infty  a_k z^k,$ 
$a_k >0,  k=0, 1, 2, \ldots ,$ be an entire function 
such that $2\sqrt[3]{2} \leq  q_2(f) \leq q_3(f) \leq q_4(f) \leq \cdots.$
For an arbitrary integer $k \geq 2$ we define 
$$\rho_k(f) := q_2(f)q_3(f) \cdots  q_k(f) \sqrt{q_{k+1}(f)}.$$ 
Then, for all sufficiently large $k$,  the function $f$ has 
exactly $k$ zeros on the disk $\{z\  :\   |z| <  \rho_k(f) \}$
counting multiplicities. }
\end{Lemma}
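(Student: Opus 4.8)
The plan is to locate the zeros by the argument principle on the circle $|z|=\rho_k$, after observing that $\rho_k$ is exactly the transition radius at which the $k$-th Taylor term becomes maximal. Normalising $a_0=a_1=1$ so that $p_n:=a_{n-1}/a_n=q_2q_3\cdots q_n$, one sees at once that $\rho_k=q_2\cdots q_k\sqrt{q_{k+1}}=\sqrt{p_kp_{k+1}}$ is the geometric mean of $p_k$ and $p_{k+1}$. Using $a_n=(q_2^{\,n-1}q_3^{\,n-2}\cdots q_n)^{-1}$, a direct computation of the ratios of consecutive terms on $|z|=\rho_k$ gives $|a_{k\pm1}\rho_k^{\,k\pm1}|/|a_k\rho_k^{\,k}|=q_{k+1}^{-1/2}$ and, more generally, the Gaussian-type decay $|a_{k+j}\rho_k^{\,k+j}|/|a_k\rho_k^{\,k}|\le q_{k+1}^{-j^2/2}$ together with $|a_{k-j}\rho_k^{\,k-j}|/|a_k\rho_k^{\,k}|\le q_2^{-j^2/2}$, each bounded by $(2\sqrt[3]{2})^{-j^2/2}$. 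Thus on $|z|=\rho_k$ the term $a_kz^k$ is central and all remaining terms decay super-geometrically in $|n-k|$. Recording these identities is the first, routine step, and they show that up to any prescribed error only boundedly many terms around the index $k$ are relevant.

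The naive continuation --- Rouch\'e's theorem comparing $f$ with the single monomial $a_kz^k$ on $|z|=\rho_k$ --- is where the difficulty lies, and it fails under the present hypothesis. Since $q_2\ge 2\sqrt[3]{2}\approx2.52$ may lie well below Hutchinson's value $4$, the central term does not dominate the sum of the others: the two nearest neighbours alone contribute $2\,q_{k+1}^{-1/2}$, which is about $1.26>1$ when $q_{k+1}$ is close to $2\sqrt[3]{2}$. Equivalently, the central trinomial $a_{k-1}z^{k-1}+a_kz^k+a_{k+1}z^{k+1}$, whose value on $|z|=\rho_k$ equals $a_k\rho_k^{\,k}\big(1+2q_{k+1}^{-1/2}\cos\theta\big)$, can itself vanish there (non-vanishing would force $q_{k+1}>4$), so neither single-monomial Rouch\'e nor a pointwise ``$\mathrm{Re}(e^{-ik\theta}f)>0$'' argument is available. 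Overcoming this non-domination is the heart of the matter, and it is precisely what the constant $2\sqrt[3]{2}$ should be calibrated to make possible.

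To get around the obstacle I would replace the comparison monomial by the partial theta section, exactly in the spirit of Lemma~\ref{th:lm3}. After the substitution $z=\rho_k w$, the term-by-term domination inequalities established there bound the relevant section of $f$ from below by the corresponding section of $g_{\sqrt{q_{k+1}}}$ evaluated at $-\sqrt{q_{k+1}}\,\rho_k w$, a partial theta model whose central index at the matching radius is again $k$. The Gaussian decay lets me truncate both $f$ and the model past $|n-k|=C$ with a tail smaller than any fixed $\varepsilon$ once $k$ is large, reducing the question to a bounded band of central terms; on this band I would construct a homotopy on the circle from $f$ to the model that never vanishes, so that the two functions have the same winding number around $|z|=\rho_k$. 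The zero count of the model is then pinned to exactly $k$ for all large $k$ by the localisation statements (4)--(7) of Theorem C, and the argument principle yields exactly $k$ zeros of $f$ in $|z|<\rho_k$, counted with multiplicity.

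I expect the genuinely hard point to be the verification that $f$ does not vanish on $|z|=\rho_k$ and that the winding number is exactly $k$ rather than $k\pm O(1)$, since the central band may contain near-cancellations when $q_{k+1}<4$. It is worth noting that the tempting shortcut of counting only the real zeros via the sign-alternation Lemma~\ref{th:lm4} is not available in general: that lemma requires $q_{j_0}\ge c_{2m_0}$ for some $j_0,m_0$, which is impossible when $\lim_{n\to\infty}q_n\le q_\infty$ (for instance when $q_n\equiv2\sqrt[3]{2}$), whereas the present lemma must hold in that regime as well. This is why the winding-number comparison with the partial theta model, together with Theorem~E to confine the finitely many nonreal zeros inside a fixed disk once $k$ is large, seems to be the necessary route.
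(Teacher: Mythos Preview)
The paper does not prove this lemma at all: it is quoted verbatim from \cite[Lemma~2.1]{ngthv4} and invoked as a black box in the proof of Theorem~\ref{th:mthm2}. There is therefore no in-paper argument to compare against, and your proposal has to stand on its own.

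Your preliminary estimates are correct and well organised: the identification $\rho_k=\sqrt{p_kp_{k+1}}$ and the Gaussian decay $|a_{k\pm j}\rho_k^{\,k\pm j}|/|a_k\rho_k^{\,k}|\le (2\sqrt[3]{2})^{-j^2/2}$ do hold, and you rightly diagnose that single-monomial Rouch\'e fails because the two nearest neighbours alone already contribute $2q_{k+1}^{-1/2}>1$ when $q_{k+1}<4$. But the proposal stalls precisely at the crux. You announce that you ``would construct a homotopy on the circle from $f$ to the model that never vanishes'' and then, two sentences later, concede that ``the genuinely hard point'' is exactly the non-vanishing on $|z|=\rho_k$ and the determination of the winding number. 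That \emph{is} the lemma; naming a homotopy without exhibiting it or proving it avoids zero is not a proof. Note also that the comparison inequalities in Lemma~\ref{th:lm3} are one-sided real inequalities on the segment $(1,q_2]$; they do not transfer to modulus bounds on a full circle in $\mathbb{C}$, so ``exactly in the spirit of Lemma~\ref{th:lm3}'' does not buy you the non-vanishing you need.

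There is also a circularity. You propose to use Theorem~E to confine the nonreal zeros to a fixed disk. But Theorem~E is the main result of \cite{ngthv4}, whereas the present statement is \cite[Lemma~2.1]{ngthv4}: in that paper the lemma is an ingredient \emph{for} Theorem~E, not a consequence of it, and in the current paper the same dependency is visible (Lemma~\ref{th:lm5} is used to deduce the zero count in Theorem~\ref{th:mthm2}, which sharpens Theorem~E). Invoking Theorem~E here begs the question.
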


Let us choose an arbitrary $k \geq 2$,  being large enough to get the
statement of the previous lemma, and $k \geq j_0 +2 m_0 - 2. $
Then the number of zeros of $\varphi$ (counting multiplicities) in the disk $\{z\  :\   |z| <  
q_2q_3\cdots  q_k \sqrt{q_{k+1}} \}$ is equal to $k.$ By Lemma  \ref{th:lm4}
we have 
$$\sign \varphi(x_{j_0 +2 m_0 -3})= - \sign   \varphi(x_{j_0 +2 m_0 -2}); \  
\sign   \varphi(x_{j_0 +2 m_0 -2})  $$ 
$$=
- \sign  \varphi(x_{j_0 +2 m_0 -1});   \ldots ;\   \sign  \varphi(x_{k-2}) = -\sign  \varphi(x_{k-1}), $$
and 
$$0 < x_{j_0 +2 m_0 -3} < x_{j_0 +2 m_0 -2} < \cdots < x_{k-1}  <
q_2q_3 \cdots  q_k < q_2q_3\cdots  q_k \sqrt{q_{k+1}}.$$ 
Hence, the
function $ \varphi$ has $k - j_0 -2m_0 +3 $ sign changes in the interval
$(0, q_2q_3\cdots  q_k \sqrt{q_{k+1}}),$  whence the number of real zeros
of $ \varphi$ in the disk $\{z\  :\   |z| <  q_2q_3\cdots  q_k \sqrt{q_{k+1}} \}$
is at least $k - j_0 -2m_0 + 2. $ Therefore, the number of nonreal zeros of $ \varphi$ in 
this disk is less than or equal to $j_0 +2m_0 -2.$ Since $k$ is an arbitrary large enough 
integer, we get that $ \varphi$  has not more than $j_0 +2m_0 -2$ nonreal zeros.

{\bf Aknowledgement.} The research was supported  by  the National Research Foundation of Ukraine funded by Ukrainian State budget in frames of project 2020.02/0096 ``Operators in infinite-dimensional spaces:  the interplay between geometry, algebra and topology''.

The authors would like to thank the reviewer for careful reading and valuable remarks.

\end{document}